\DeclareMathOperator{\Div}{div}
\DeclareMathOperator{\trace}{trace}
\DeclareMathOperator{\dif}{d}
\newcommand{\Cal}{\mathcal{C}}
\renewcommand{\H}{\mathscr{H}}
\newcommand{\Fa}{\mathcal{F}}
\def \a{\alpha}
\def \b{\beta}
\def \gf{\mathfrak{g}}
\def \G{\Gamma}
\def \o{\omega}
\def \phi{\varphi}
\def \Phi{\varPhi}
\def \p{\pi}
\def \r{\rho}
\def \s{\sigma}
\def \t{\tau}
\def \R{\mathbb{R}}
\def \Hq{\mathbb{H}\,}
\def \C{\mathbb{C}\,}
\def\widecheckg{g^{\hspace*{-2.5pt}\vbox to 5pt{\hbox to
0pt{\LARGE$\check{}$}}}\hspace*{2pt}}
\def\widecheckl{\lambda^{\hspace*{-3.5pt}\vbox to 8pt{\hbox to
0pt{\LARGE$\check{}$}}}\hspace*{2pt}}
\begin{document}

\title{Harmonic maps and twistorial structures}
\author{G.~Deschamps, E.~Loubeau, R.~Pantilie}  
\thanks{R.~Pantilie acknowledges partial financial support from the Romanian National Authority for Scientific Research, CNCS-UEFISCDI, 
project no.\ PN-III-P4-ID-PCE-2016-0019.} 
\address{G.~Deschamps, D\'epartement de Math\'ematiques, Universit\'e de Bretagne Occidentale, 6,
avenue Victor Le Gorgeu, CS 93837, 29238 Brest Cedex 3, France} 
\email{\href{mailto:Guillaume.Deschamps@univ-brest.fr}{Guillaume.Deschamps@univ-brest.fr}}
\address{E.~Loubeau, D\'epartement de Math\'ematiques, Universit\'e de Bretagne Occidentale, 6,
avenue Victor Le Gorgeu, CS 93837, 29238 Brest Cedex 3, France} 
\email{\href{mailto:Eric.Loubeau@univ-brest.fr}{Eric.Loubeau@univ-brest.fr}} 
\address{R.~Pantilie, Institutul de Matematic\u a ``Simion~Stoilow'' al Academiei Rom\^ane,
C.P. 1-764, 014700, Bucure\c sti, Rom\^ania} 
\email{\href{mailto:Radu.Pantilie@imar.ro}{Radu.Pantilie@imar.ro}} 
\subjclass[2010]{53C43, 53C28, 53B21} 
\keywords{harmonic maps, twistor theory}

\newtheorem{thm}{Theorem}[section]
\newtheorem{lem}[thm]{Lemma}
\newtheorem{cor}[thm]{Corollary}
\newtheorem{prop}[thm]{Proposition}

\theoremstyle{definition}

\newtheorem{defn}[thm]{Definition}
\newtheorem{rem}[thm]{Remark}
\newtheorem{exm}[thm]{Example}

\numberwithin{equation}{section}

\maketitle
\thispagestyle{empty}
\vspace{-3mm}
\begin{center}
\emph{This paper is dedicated to the memory of \c Stefan Papadima.}
\end{center}
\vspace{1mm}
\begin{abstract}
We introduce the notion of Riemannian twistorial structure and we show that it provides new natural constructions of harmonic maps.   
\end{abstract}

\section*{Introduction}  

\indent 
Perhaps that the main apparently embarrassing problem in differential geometry is the lack of an obvious notion of morphism. We argue that this is 
not an well posed problem, that is, the definition of a `linear $G$-structure', for some Lie group $G$, as a faithful representation of $G$ is too raw:  
essentially, we have to consider only the representations preserving a holomorphic embedding of a compact complex manifold $Y$ into some complex Grassmannian of the representation's vector space $U$ (cf.\ \cite{Pan-rtwist}\,). Note that, this is not too restrictive as, assuming the 
complexified representation irreducible, we can, for example, take $Y$ to be the closed $G^{\C\!}$-orbit in the projectivisation of 
the complexification of $U^*$, setting which proved to be crucial in the classification of irreducible torsion free affine holonomies 
\cite{Mer}\,,\,\cite{MerSch}\,.\\ 
\indent 
Consequently, the `$G$-structures' should be given by suitable twistorial structures and the `morphisms' be just the corresponding twistorial maps 
of \cite{PanWoo-sd}\,,\,\cite{LouPan-II}\,.\\ 
\indent 
One of the main tasks of this paper is to construct the suitable definition of `Riemannian twistorial structure'. This is based on the 
`Euclidean (twistorial) $Y$-structures', that is, embeddings of $Y$ into the generalized Grassmannian of coisotropic subspaces 
(that is, orthogonal complements of isotropic subspaces) of the corresponding Euclidean space. As is usual in twistor theory, examples abound 
(see Section \ref{section:Euclid_Y}\,, below) with the more or less obvious particular cases of Hermitian (where $Y$ is a two points space) 
and metric quaternionic ($Y$ the Riemann sphere \cite{Pan-hqo}\,) geometries. Furthermore, as coisotropy means isotropic orthogonal complement, 
there is always an almost CR manifold underlying the construction of a Riemannian twistorial structure and this, by following an idea of 
\cite{EelSal} (see \cite{Raw-f}\,), can be shown to provide natural constructions of harmonic maps (see Section \ref{section:EelSal_CR}\,).\\ 
\indent 
This brings us to the other main task of this paper, namely, to find new natural constructions of harmonic maps. On one hand, this is in continuation of  
our investigation of the interplay of the properties of a map of being twistorial and a harmonic morphism  
(\,\cite{PanWoo-sd}\,,\,\cite{LouPan}\,,\,\,\cite{LouPan-II}\,) and, on the other, it provides the adequate level of generality 
for previously known twistorial constructions of harmonic maps (\,\cite{Bry-har_twistor}\,,\,\cite{BurRaw}\,). We achieve this just because,  
in our setting, \emph{any Riemannian symmetric space admits a nontrivial Riemannian twistorial structure invariant under the isometry group} 
(Theorem \ref{thm:sym_hts}\,) leading, for example, to the natural twistorial constructions of Example \ref{exm:hts_sym_(3)}\,,  
based on the new twistorial structure of Example \ref{exm:homog_hts}(3)\,.\\ 
\indent 
But there is also a feature of twistorial structures which seems to have not been observed before: the canonical projection from the `total space' 
onto the twistor space, endowed with natural Riemannian metrics, usually, pulls back holomorphic functions to harmonic functions and, consequently 
(see \cite{Lou-pseudo_hm}\,,\,\cite{ApApBr}\,,\,\cite{BaiWoo2} for more information on such maps), 
if the twistor space is K\"ahler (as it is the case of the main examples provided by generalized flag manifolds) this projection is a harmonic map. 
This gives Theorem \ref{thm:Rts} and Corollary \ref{cor:Rts} with the abundant sources of examples provided by Examples \ref{exm:flat_hts} 
and \ref{exm:homog_hts}\,.

\section{Euclidean twistorial structures} \label{section:Euclid_Y}

\indent 
The following definition specializes a notion of \cite{Pan-rtwist} (cf.\ \cite{Pan-hqo}\,). 

\begin{defn} \label{defn:Ets} 
Let $U$ be an Euclidean space, $\dim U=n$\,, let $k\in\mathbb{N}$\,, $n/2\leq k\leq n$\,, and let $Y$ be a compact complex manifold endowed 
with a conjugation; that is, an involutive antiholomorphic diffeomorphism.\\ 
\indent 
\emph{An Euclidean (twistorial) $Y$-structure on $U$} is a a holomorphic embedding $\s$ of $Y$ into
${\rm Gr}_k\bigl(U^{\C\!}\bigr)$ such that:\\ 
\indent 
\quad(1) $\s$ intertwines the conjugations;\\ 
\indent 
\quad(2) $\s(y)\subseteq U^{\C\!}$ is coisotropic, for any $y\in Y$;\\   
\indent 
We say that $\s$ is \emph{full/maximal} if, further, the following condition is satisfied:\\  
\indent 
\quad(3) The tautological exact sequence of holomorphic vector bundles over $Y$  
\begin{equation} \label{e:Ets} 
0\longrightarrow\bigsqcup_{y\in Y}\s(y)\longrightarrow Y\times U^{\C\!}\longrightarrow\mathcal{U}\longrightarrow0 
\end{equation}   
induces a linear map from $U^{\C\!}$ to $H^0(\mathcal{U})$ which is injective/bijective.\\ 
\indent 
A \emph{Hermitian $Y$-structure on $U$} is an Euclidean $Y$-structure on $U$ with $k=n/2$\,.\\ 
\indent 
If $\s_j$ are Euclidean $Y_j$-structures on $U_j$\,, $(j=1,2)$\,, \emph{a morphism from $(U_1,\s_1)$ to $(U_2,\s_2)$} is a pair $(A,\a)$\,, 
where $A:U_1\to U_2$ is linear, $\a:Y_1\to Y_2$ is holomorphic and intertwines the conjugations, and 
$A(\s_1(y))\subseteq\s_2(\a(y))$\,, for any $y\in Y_1$\,.\\ 
\indent 
If $\s$ is an Euclidean $Y$-structure on $U$, an \emph{automorphism of $(U,\s)$} is an isomorphism $(A,\a):(U,\s)\to(U,\s)$ such that $A\in{\rm SO}(U)$\,.  
\end{defn}  

\begin{rem} 
If $U$ is endowed with an Euclidean $Y$-structure $\s$ then $Y$ is K\"ahler. Also, any automorphism $(A,\a)$ of $(U,\s)$ 
preserves the tautological exact sequence \eqref{e:Ets}\,, and $\a$ is an isometry of $Y$. 
\end{rem} 

\begin{exm} \label{exm:Ets} 
1) In Definition \ref{defn:Ets}\,, if $k=n$ (and $Y$ is formed of just one point) then we obtain the \emph{trivial Euclidean structure} of $U$.\\ 
\indent 
2) Any Euclidean $Y$-structure is the product of a trivial Euclidean structure and a full Euclidean $Y$-structure. Indeed, if $\s$ is an Euclidean 
$Y$-structure on $U$ then $\bigcap_{y\in Y}\s(y)=V^{\C\!}$ for some vector subspace $V$ of $U$ and $\s(y)=V^{\C\!}\oplus\t(y)$\,, for any $y\in Y$, 
where $\t$ is a full Euclidean $Y$-structure on $V^{\perp}(\subseteq U)$.\\ 
\indent  
3) Let $n=\dim U$ be even and $Y$ be formed of just two points. Then any Hermitian $Y$-structure on $U$ is given by an orthogonal complex structure on $U$.\\ 
\indent 
4) Let $U=\Hq^{\!m}$ and let $S^2$ be the sphere of unit imaginary quaternions. By associating to any $q\in S^2$ the $-{\rm i}$ the eigenspace 
of the linear complex structure given by quaternionic multiplication to the left with $q$\,, we obtain a maximal Hermitian $S^2$-structure whose 
automorphism group is ${\rm Sp}(1)\cdot{\rm Sp}(m)$ embedded into $\bigl({\rm Sp}(1)\cdot{\rm Sp}(m)\bigr)\times{\rm SO}(3)$ 
as the graph of the obvious Lie groups morphism from ${\rm Sp}(1)\cdot{\rm Sp}(m)$ to ${\rm SO}(3)$\,.\\ 
\indent 
Moreover, any maximal Hermitian $S^2$-structure is obtained this way.\\ 
\indent 
5) Let $U=\R^3\times\R^m$ and let $S^2$ be embedded as the (isotropic) conic in the projectivisation of the complexification of the Euclidean space $\R^3$.   
By defining $\s(\ell)=\ell^{\perp}\otimes\C^{\!m}$, for any $\ell\in S^2\bigl(\subseteq\C\!P^2\bigr)$\,, we obtain a maximal Euclidean $S^2$-structure 
whose automorphism group is ${\rm SO}(3)\times{\rm SO}(m)$ (see \cite{fq_2} for more details on these structures).\\ 
\indent 
More generally, if $U$ is endowed with a linear $Y$-structure and $V$ is endowed with a (trivial) Euclidean structure then $U\otimes V$ is endowed 
with an Euclidean $Y$-structure whose automorphism group is covered by the direct product of the automorphism group of $U$ and ${\rm SO}(V)$\,.\\ 
\indent 
6) Let $G$ be a complex semisimple Lie group and $P\subseteq G$ a parabolic subgroup such that $G$ acts effectively on $Y=G/P$. 
Let $G_{\R}$ be a compact real form of $G$ and denote by $\mathfrak{g}$\,, $\mathfrak{g}_{\R}$\,, $\mathfrak{p}$ the Lie algebras 
of $G$, $G_{\R}$\,, $P$, respectively.\\ 
\indent 
As $\mathfrak{p}$ is its own normalizer in $\mathfrak{g}$ and $P$ is connected, we have a holomorphic embedding $\s:Y\to{\rm Gr}_k(\mathfrak{g})$ 
given by $\s(aP)=({\rm Ad}\,a)(\mathfrak{p})$\,, for any $a\in G$. Consequently, $\s$ is an Euclidean $Y$-structure on $\mathfrak{g}_{\R}$\,, 
where $\mathfrak{g}_{\R}$ is endowed with the opposite of its Killing form. Moreover, this is maximal as the corresponding tautological exact sequence 
\eqref{e:Ets} is the canonical exact sequence \cite{At-57} associated to the holomorphic principal bundle $(G,Y,P)$\,, and by \cite[Proposition II]{Som-73}\,, 
we have $\mathfrak{g}=H^0\bigl(T^{1,0}Y\bigr)$\,.\\ 
\indent 
Furthermore, the automorphism group of $\s$ is a semidirect product $H\rtimes G_{\R}$\,, where the complexification of the Lie algebra of $H$ is 
$H^0\bigl({\rm End}\bigl(T^{1,0}Y\bigr)\bigr)$\,.   
\end{exm} 

\begin{defn} \label{defn:induced_Y-str} 
Let $\r:E\to U$ be an orthogonal projection between Euclidean spaces. Suppose that $E$ is endowed with an Euclidean $Y$-structure 
$\s:Y\to{\rm Gr}\bigl(E^{\C\!}\bigr)$ such that $\s(y)\cap({\rm ker}\r)^{\C\!}=\{0\}$\,, for any $y\in Y$.\\ 
\indent 
Then $Y\to{\rm Gr}\bigl(U^{\C\!}\bigr)$\,, $y\mapsto\r^{\C\!}\bigl(\s(y)\bigr)$\,, $(y\in Y)$\,, is the Euclidean $Y$-structure on $U$ 
\emph{induced by $\r$ and $\s$}. 
\end{defn} 

\begin{lem} \label{lem:isotropic_part_of_induced} 
Let $E$ be endowed with an Euclidean $Y$-structure $\s$ and let $\r:E\to U$ be an orthogonal projection which induces a $Y$-structure on $U$.\\ 
\indent 
Then, for any $y\in Y$, we have $\r\bigl(\s(y)\bigr)^{\perp}=\s(y)^{\perp}\cap\,U^{\C\!}$. Consequently, for any $y\in Y$, the orthogonal projection 
of $\r\bigl(\s(y)^{\perp}\bigr)$ onto $\bigl(\r\bigl(\s(y)\bigr)\cap\overline{\r\bigl(\s(y)\bigr)}\,\bigr)^{\perp}$ is equal to $\r\bigl(\s(y)\bigr)^{\perp}$. 
\end{lem}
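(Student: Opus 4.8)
The plan is to work throughout with the complex-bilinear extensions of the Euclidean inner products, with respect to which the complexification $\r^{\C}\colon E^{\C\!}\to U^{\C\!}$ is the orthogonal projection onto $U^{\C\!}$; in particular it is self-adjoint and restricts to the identity on $U^{\C\!}$. Writing $W=\s(y)$ and, as in the statement, $\r$ also for $\r^{\C}$, the first assertion is then a purely linear-algebraic consequence of self-adjointness: for $u\in U^{\C\!}$ and $w\in W$ we have $\langle u,\r(w)\rangle=\langle\r(u),w\rangle=\langle u,w\rangle$, so $u\perp\r(W)$ if and only if $u\perp W$; hence $\r\bigl(\s(y)\bigr)^{\perp}=W^{\perp}\cap U^{\C\!}$. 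I abbreviate $K=\r\bigl(\s(y)\bigr)^{\perp}=W^{\perp}\cap U^{\C\!}$.

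For the consequence, put $I=\r\bigl(\s(y)\bigr)\cap\overline{\r\bigl(\s(y)\bigr)}$. Conjugation is an isometry of the real form and so commutes with orthogonal complementation, giving $\overline{\r(W)}^{\perp}=\overline{K}$ and therefore $I^{\perp}=\r(W)^{\perp}+\overline{\r(W)}^{\perp}=K+\overline{K}$. The elementary but decisive point is that the pairing $K\times\overline{K}\to\C$, $(k,\overline{k'})\mapsto\langle k,\overline{k'}\rangle$, is nondegenerate: writing $k=x+\mathrm{i}y$ with $x,y\in U$ gives $\langle k,\overline{k}\rangle=|x|^2+|y|^2$, so $(k,k')\mapsto\langle k,\overline{k'}\rangle$ is a positive-definite Hermitian form on $K$. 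Consequently $K\cap\overline{K}=0$ and $K\oplus\overline{K}=I^{\perp}$ is nondegenerate, so the orthogonal projection $\pi$ onto $I^{\perp}$ is well defined and self-adjoint.

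It then remains to show $\pi\bigl(\r(W^{\perp})\bigr)=K$. The inclusion $\supseteq$ is immediate, since every $\kappa\in K=W^{\perp}\cap U^{\C\!}$ satisfies $\r(\kappa)=\kappa$ and, as $K\subseteq I^{\perp}$, also $\pi(\kappa)=\kappa$. For $\subseteq$, take $w'\in W^{\perp}$ and write $\pi\bigl(\r(w')\bigr)=k_1+\overline{k_2}$ with $k_1,k_2\in K$; it suffices to prove $\overline{k_2}=0$. For each $\kappa\in K$, self-adjointness of $\pi$ and $\r$ together with $\pi(\kappa)=\kappa=\r(\kappa)$ give $\langle\pi\r(w'),\kappa\rangle=\langle w',\kappa\rangle$, and this vanishes \emph{because $\s(y)$ is coisotropic}, i.e.\ $W^{\perp}$ is isotropic and contains both $w'$ and $\kappa$. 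Since also $\langle k_1,\kappa\rangle=0$ ($K$ being isotropic), we obtain $\langle\overline{k_2},\kappa\rangle=0$ for all $\kappa\in K$, whence $\overline{k_2}=0$ by the nondegeneracy established above; thus $\pi\r(w')=k_1\in K$.

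\textbf{Main obstacle.} The crux is the second paragraph: one has to notice that the Euclidean positivity turns $k\mapsto\langle k,\overline{k}\rangle$ into a positive-definite Hermitian form, which is precisely what makes $I^{\perp}=K\oplus\overline{K}$ nondegenerate (so that $\pi$ exists) and makes the pairing of $K$ with $\overline{K}$ faithful (so that the $\overline{K}$-component of a vector is detected by testing against $K$). The final paragraph is then bookkeeping, with the coisotropy of $\s(y)$ entering as the single geometric input that forces $\langle w',\kappa\rangle=0$.
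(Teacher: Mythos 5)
Your proof is correct, but it completes the lemma along a somewhat different route than the paper, whose own proof is only a few lines. For the first identity the paper does not use adjointness: writing $C=\sigma(y)$ (your $W$), it observes that the orthogonal complement of $\rho(C)$ in $U^{\mathbb{C}}$ is the complement in $E^{\mathbb{C}}$ of $\rho(C)+(\ker\rho)^{\mathbb{C}}$, and that $\rho(C)+(\ker\rho)^{\mathbb{C}}=C+(\ker\rho)^{\mathbb{C}}$ because both sides equal $\rho^{-1}\bigl(\rho(C)\bigr)$; your self-adjointness computation is an equivalent, arguably slicker, rendering of the same elementary fact. The genuine divergence is in the second assertion: the paper only records the inclusion $\rho(C)^{\perp}\subseteq\rho\bigl(C^{\perp}\bigr)$ and declares the claim to follow, the intended completion being structural --- by coisotropy $C^{\perp}\subseteq C$, so $\rho\bigl(C^{\perp}\bigr)$ is sandwiched as $K\subseteq\rho\bigl(C^{\perp}\bigr)\subseteq\rho(C)$ with $K=\rho(C)^{\perp}$, and the projection $\pi$ onto $I^{\perp}$ sends both ends onto $K$ (a coisotropic subspace decomposes as $\rho(C)=I\oplus K$), hence also the middle. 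You instead prove $\pi\bigl(\rho(W^{\perp})\bigr)\subseteq K$ by testing against $K$ with the positive-definite Hermitian pairing $(k,k')\mapsto\langle k,\overline{k'}\rangle$, using isotropy of $W^{\perp}$ upstairs in $E^{\mathbb{C}}$; this buys a self-contained argument that never needs the decomposition $\rho(C)=I\oplus K$, at the cost of the adjointness bookkeeping for $\pi$ and $\rho$. One imprecision worth fixing: ``$K\cap\overline{K}=0$'' is not a consequence of positive-definiteness of the pairing alone (it fails for $K=U^{\mathbb{C}}$); you also need the isotropy of $K$ --- immediate from $K\subseteq W^{\perp}$, but invoked only in your last paragraph --- to see that $k\in K\cap\overline{K}$ forces $\langle k,\overline{k}\rangle=0$ and hence $k=0$.
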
 
\begin{proof} 
Let $y\in Y$ and denote $C=\s(y)$\,. As $U$ is the orthogonal complement in $E$ of ${\rm ker}\r$\,, the orthogonal complement in $U^{\C\!}$ 
of $\r(C)$ is equal to the orthogonal complement in $E^{\C\!}$ of $\r(C)+({\rm ker}\r)^{\C\!}$. Thus, we have to prove that 
$\bigl(\r(C)+({\rm ker}\r)^{\C\!}\bigr)^{\perp}=C^{\perp}\cap U^{\C}$; equivalently, $\r(C)+({\rm ker}\r)^{\C\!}=C+({\rm ker}\r)^{\C\!}$ 
which holds because both sides are equal to $\r^{-1}(\r(C))$\,.\\ 
\indent 
As $\r(C)^{\perp}=C^{\perp}\cap U^{\C\!}$, we have $\r(C)^{\perp}\subseteq\r\bigl(C^{\perp}\bigr)$ and the last statement follows.  
\end{proof} 

\begin{rem} 
Any maximal (quaternionic-like) Euclidean $S^2$-structure (in particular, the one given by Example \ref{exm:Ets}(5)\,) is induced by a 
Hermitian $S^2$-structure (as given by Example \ref{exm:Ets}(4)\,); moreover, maximality is a necessary condition for this to hold (see \cite{Pan-hqo}\,).\\ 
\indent 
But, not all maximal Euclidean $Y$-structures are induced by Hermitian $Y$-structures. For an example, let $Y$ be the hyperquadric of isotropic 
directions in the complexification of an Euclidean space $U$. If $\dim U\geq4$ then $\s:Y\to P\bigl(U^{\C}\bigr)$\,, $\s(y)=y^{\perp}$, $(y\in Y)$\,, 
is a maximal Euclidean $Y$-structure which cannot be induced by a Hermitian one.\\ 
\indent 
Indeed, let $0\longrightarrow\mathcal{E}\longrightarrow Y\times U^{\C\!}\longrightarrow\mathcal{U}\longrightarrow0$ 
be the corresponding tautological exact sequence \eqref{e:Ets} of holomorphic vector bundles. From this exact sequence we deduce 
that $H^0(\mathcal{E})=H^1(\mathcal{E})=\{0\}$\,, and from its dual, because $H^0(\mathcal{U}^*)=H^1(\mathcal{U}^*)=\{0\}$ 
(by the Kodaira vanishing theorem and Serre duality), we deduce $U^{\C\!}=H^0(\mathcal{E}^*)$\,.\\ 
\indent 
Now, if $\s$ would be induced by a Hermitian $Y$-structure on some Euclidean space $E$ then its tautological exact sequence would be 
$0\longrightarrow\mathcal{E}\longrightarrow Y\times E^{\C\!}\longrightarrow\mathcal{E}^*\longrightarrow0$ which would imply  
$E^{\C\!}=H^0(\mathcal{E}^*)=U^{\C\!}$, a contradiction.\\ 
\indent 
More generally, by taking $Y$ to be any generalized flag manifold, $\dim_{\C\!}Y\geq2$\,, endowed with suitable very ample line bundles,  
we obtain maximal Euclidean $Y$-structures not induced by Hermitian $Y$-structures.\\ 
\indent 
Similarly, it can be proved that the Euclidean $Y$-structure of Example \ref{exm:Ets}(6) is induced by a Hermitian $Y$-structure if and only if 
$Y$ is a product of Riemann spheres.\\ 
\indent 
Finally, let $E$ be endowed with an Euclidean $Y$-structure $\s$. Then an orthogonal projection $\r:E\to U$ induces a maximal Hermitian 
$Y$-structure on $U$ if and only if it is an isometry.   
\end{rem}

\section{Harmonic maps and CR twistor spaces} \label{section:EelSal_CR}

\indent 
Let $M$ be a (smooth, oriented) manifold, and let $(Y,\Cal)$ be an almost CR manifold; that is, $\Cal\subseteq T^{\C\!}Y$ such that $\Cal\cap\,\overline{\Cal}=0$\,. 
Then $(Y,\Cal)$ is an \emph{almost CR twistor space} (cf.\ \cite{LeB-CR}\,, \cite{LouPan-II}\,, \cite{fq}\,) of $M$ if there is a proper surjective 
submersion $\p:Y\to M$ such that:\\ 
\indent 
\quad(1) For any fibre $Y_x=\p^{-1}(x)$\,, $(x\in M)$\,, the intersection $\Cal\cap T^{\C\!}Y_x$ is the antiholomorphic tangent bundle of a complex structure 
on $Y_x$\,;\\ 
\indent 
\quad(2)  For any $x\in M$, the map $Y_x\to{\rm Gr}\bigl(T_x^{\C\!}M\bigr)$\,, $y\mapsto\dif\!\p\bigl(\Cal_y\bigr)$\,, $(y\in Y_x)$\,, is a holomorphic embedding.\\ 
\indent 
Suppose, now, that $M$ is endowed with a Riemannian metric. Then the almost CR twistor space $(Y,\Cal)$ is \emph{Riemannian} if $Y$ 
is endowed with a Riemannian metric such that the following conditions are, also, satisfied:\\ 
\indent 
\quad(3) $\p$ is a Riemannian submersion with totally geodesic fibres;\\ 
\indent 
\quad(4) $\Cal$ is isotropic;\\ 
\indent 
\quad(5) For any $x\in M$, the Riemannian metric induced from $Y$ on $Y_x$ is the same with the K\"ahler metric induced from 
${\rm Gr}\bigl(T_x^{\C\!}M\bigr)$\,;\\ 
\indent 
\quad(6) Under the obvious embedding $Y\subseteq{\rm Gr}\bigl(T^{\C\!}M\bigr)$\,, the horizontal distribution $\H=({\rm ker}\dif\!\p)^{\perp}$ on $Y$ 
is induced by the Levi-Civita connection of $M$. 

\begin{rem} \label{rem:CR_holo_constr} 
Let $x_0\in M$ and let $G$ be the automorphism group of the Euclidean $Y_{x_0}$-structure given by 
$Y_{x_0}\to{\rm Gr}\bigl(T_{x_0}^{\C\!}M\bigr)$\,, $y\mapsto\dif\!\p\bigl(\Cal_y\bigr)^{\perp}$\,, $\bigl(y\in Y_{x_0}\bigr)$\,.\\ 
\indent 
Then $G$ contains the holonomy group of $M$ at $x_0$ and, consequently, we may retrieve $(Y,\Cal,\p)$ from the $Y_{x_0}$-structure 
on $T_{x_0}M$ and the holonomy bundle of the orthonormal frame bundle of $M$. 
\end{rem} 

\indent 
Let $(Y,\Cal)$ be a Riemannian almost CR twistor space given by $\p:Y\to M$ and let $\H=({\rm ker}\dif\!\p)^{\perp}$. 
Then $\widetilde{\Cal}=\bigl(\Cal\cap\H^{\C\!}\bigr)\oplus({\rm ker}\dif\!\p)^{1,0}$ is an isotropic almost CR structure on $Y$. 
Essentially, the following result was obtained in \cite[\S5]{Raw-f} (cf.\ \cite{EelSal}\,).  

\begin{thm} \label{thm:CR_EelSal} 
Let $M$ be a Riemannian manifold endowed with a Riemannian almost CR twistor space $(Y,\Cal)$ given by $\p:Y\to M$. 
Let $(N,J)$ be an almost complex manifold and let $\phi:N\to Y$ be an immersion such that:\\ 
\indent 
\quad{\rm (i)} $\dif\!\phi(TN)\cap({\rm ker}\dif\!\p)=0$\,;\\ 
\indent 
\quad{\rm (ii)} $\phi:(N,J)\to(Y,\widetilde{\Cal})$ is a CR map (that is, $\dif\!\phi\bigl(T^{0,1}N\bigr)\subseteq\widetilde{\Cal}$).\\ 
\indent 
Then $\p\circ\phi:(N,J)\to M$ is a $(1,1)$-geodesic immersion and, in particular, minimal. Moreover, the metric on $N$ pulled back by $\p\circ\phi$ 
from $M$ is $(1,2)$-symplectic, with respect to $J$.  
\end{thm}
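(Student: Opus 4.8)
The plan is to verify both conclusions by a pointwise analysis of the second fundamental form of $f=\p\circ\phi$. The statement is local, so I fix $p\in N$, set $y=\phi(p)$, $x=\p(y)=f(p)$, equip $N$ with the metric $h=f^*g$ ($g$ the metric of $M$), denote by $\nabla$ its Levi-Civita connection, and study $\b=\nabla\dif\!f$. Write $\tD\subseteq\p^*T^{\C\!}M$ for the tautological subbundle with fibre $\dif\!\p\bigl(\Cal_y\bigr)$ over $y$; since $\Cal$ is isotropic (condition (4)\,) and $\p$ is a Riemannian submersion, $\tD$ is isotropic. Conditions (i) and (ii) give, for every $Y\in T^{0,1}N$, that $\dif\!\phi(Y)$ has horizontal component in $\Cal\cap\H^{\C\!}$ and vertical component in $(\ker\dif\!\p)^{1,0}$; applying $\dif\!\p$ kills the latter, so $\dif\!f(Y)=\dif\!\p\bigl(\dif\!\phi(Y)\bigr)\in\tD$. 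Hence $\dif\!f\bigl(T^{0,1}N\bigr)\subseteq\tD$, and isotropy yields $h(X,Y)=g\bigl(\dif\!f(X),\dif\!f(Y)\bigr)=0$ for all $X,Y\in T^{1,0}N$. Therefore $h$ is compatible with $J$ (its $(2,0)$-part vanishes), its K\"ahler form $\o$ is well defined, and condition (i) guarantees that $f$ is an immersion.

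To prove that $f$ is $(1,1)$-geodesic I must show $\b(X,\overline Y)=0$ for all $X,Y\in T^{1,0}N$. Extending $\overline Y$ so that $\nabla\overline Y=0$ at $p$, tensoriality reduces this to computing $\nabla^{\!f}_X\bigl(\dif\!f(\overline Y)\bigr)$, where $\dif\!f(\overline Y)$ is a section of $\phi^*\tD$. I decompose the differentiation direction $\dif\!\phi(X)$ into its horizontal part $H$ and vertical part $V$; conjugating condition (ii) shows $V\in(\ker\dif\!\p)^{0,1}$. By condition (6)\,, the subbundle $\tD$ is parallel along horizontal curves for the Levi-Civita connection of $M$ pulled back to $Y$, so the $H$-contribution to $\nabla^{\!f}_X\bigl(\dif\!f(\overline Y)\bigr)$ remains inside $\tD$. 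It then remains to understand the vertical contribution, which, the base point being fixed, measures only the infinitesimal variation of the subspace $\dif\!\p\bigl(\Cal_y\bigr)$ inside $T_x^{\C\!}M$ in the fibre direction $V$.

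This vertical term is the crux, and I expect it to be the main obstacle. The variation of $y\mapsto\dif\!\p\bigl(\Cal_y\bigr)$ is governed by the differential of this map into the Grassmannian, which is holomorphic by condition (2)\,; the relevant fibre geometry is fixed by condition (5) (its metric is the K\"ahler metric induced from ${\rm Gr}\bigl(T_x^{\C\!}M\bigr)$) and condition (1) (its complex structure), while condition (3) (totally geodesic fibres) controls the vertical part of the ambient connection. The decisive point is that $\widetilde{\Cal}$ is built from $(\ker\dif\!\p)^{1,0}$ rather than $(\ker\dif\!\p)^{0,1}$: this sign, which is the incarnation here of the Eells--Salamon choice of almost complex structure, must be shown to make the $(0,1)$-type vertical contribution stay within $\tD$, so that $\b(X,\overline Y)\in\tD$. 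Granting this, the vanishing follows formally: as $\b$ is a real symmetric form, $\overline{\b(X,\overline Y)}=\b(Y,\overline X)$, which lies in $\tD$ by the same computation with $X$ and $Y$ interchanged, so $\b(X,\overline Y)\in\tD\cap\overline{\tD}$; and since $\tD$ is isotropic, any $v\in\tD\cap\overline{\tD}$ has $g(v,\overline v)=0$ with $\overline v\in\tD$, forcing $v=0$. Hence $\b(X,\overline Y)=0$, so $f$ is $(1,1)$-geodesic, and tracing over the $J$-Hermitian metric $h$, whose trace picks out precisely this $(1,1)$-part, shows the tension field vanishes, so $f$ is minimal.

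For the last assertion I would show $(\dif\!\o)^{(1,2)}=0$ by a parallel computation. Because $h$ is of type $(1,1)$ (first paragraph), expanding $\dif\!\o$ on $X,\overline Y,\overline Z$ with $X,Y,Z\in T^{1,0}N$ leaves only covariant-derivative terms expressible through $\b$ together with inner products of $\dif\!f\bigl(T^{0,1}N\bigr)$; the vanishing $\b(X,\overline Y)=0$ just obtained and the isotropy $g\bigl(\dif\!f(\overline Y),\dif\!f(\overline Z)\bigr)=0$ then give $(\dif\!\o)^{(1,2)}=0$, that is, the pulled-back metric is $(1,2)$-symplectic with respect to $J$.
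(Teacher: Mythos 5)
You have located the crux correctly, but you have not proved it: everything of substance in this theorem sits in the step you preface with ``Granting this''. The claim that the vertical contribution to $\nabla^{f}_{X}\bigl(\dif\!f(\overline{Y})\bigr)$ stays inside $\tD$ \emph{is} the Eells--Salamon mechanism, and saying that the sign in $\widetilde{\Cal}$ ``must be shown'' to produce this is a statement of the problem, not a solution. To close the gap in your own framework you would need to argue: by the conjugate of hypothesis (ii) and condition (1), the vertical part $V$ of $\dif\!\phi(X)$ lies in $({\rm ker}\dif\!\p)^{0,1}$, the antiholomorphic tangent space of the fibre $Y_x$\,; by condition (2) the map $Y_x\to{\rm Gr}\bigl(T_x^{\C\!}M\bigr)$\,, $y\mapsto\dif\!\p\bigl(\Cal_y\bigr)$\,, is holomorphic, so $V$ is carried to a $(0,1)$-tangent vector of the Grassmannian; and the tautological bundle is a \emph{holomorphic} subbundle of $Y_x\times T_x^{\C\!}M$, so differentiating its sections in $(0,1)$-directions with the base point fixed preserves it --- conditions (3) and (6) being what guarantees that the pullback of the Levi-Civita connection of $M$ really splits into this fibre term plus the horizontal term you handled. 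The paper avoids this unpacking by reformulating: $\phi$ becomes a section $F$ of skew-adjoint $f$-structures along $N\subseteq M$, hypothesis (ii) becomes $TN\subseteq{\rm ker}(F^2+1)$\,, $F|_{TN}=J$ together with $\nabla_{FX}F=(\nabla_XF)F$ for $X\in TN$, and then a short eigenspace computation (from $FY={\rm i}Y$ one gets $(F-{\rm i})\nabla_{\overline{X}}Y=-(\nabla_{\overline{X}}F)Y$, while the identity taken in the direction $\overline{X}$ and applied to $Y$ gives $(\nabla_{\overline{X}}F)Y=0$) yields $\nabla_{\overline{X}}Y\in{\rm ker}(F-{\rm i})$\,, which is exactly the conjugate of your granted claim.

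There is also a genuine error in your last paragraph: $(1,2)$-symplecticity does not follow from $\b(X,\overline{Y})=0$ together with isotropy of $\dif\!f\bigl(T^{0,1}N\bigr)$\,. The condition $(\dif\o)^{(1,2)}=0$ is equivalent to $\nabla_{\overline{X}}Y\in T^{1,0}N$ (with $\nabla$ now the Levi-Civita connection of $h$), so it constrains the \emph{tangential} part of $\nabla^{f}_{\overline{X}}\bigl(\dif\!f(Y)\bigr)$\,, whereas $\b$ is its normal part; these are independent, and if you carry out the expansion you propose it collapses to metric-compatibility of $\nabla$ and yields nothing. (A totally geodesic $\R^4\subseteq\R^8$ carrying a metric-compatible but non-$(1,2)$-symplectic almost complex structure shows that the implication you invoke is false in general.) What does work is the tangential half of the very claim you granted: conjugating it, $\nabla^{f}_{\overline{X}}\bigl(\dif\!f(Y)\bigr)\in\overline{\tD}$\,, and since $\dif\!f(Z)\in\overline{\tD}$ and $\overline{\tD}$ is isotropic, $h\bigl(\nabla_{\overline{X}}Y,Z\bigr)=g\bigl(\nabla^{f}_{\overline{X}}\dif\!f(Y),\dif\!f(Z)\bigr)=0$ for all $(1,0)$-fields $Z$, which is the $(1,2)$-symplectic condition. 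This is precisely how the paper extracts both conclusions at once: the single statement $\nabla_{\overline{X}}Y\in{\rm ker}(F-{\rm i})$ splits, via the Gauss formula, into the intrinsic $(1,2)$-symplectic condition (tangential part) and $b(\overline{X},Y)\in{\rm ker}(F-{\rm i})$ (normal part), after which your symmetry-plus-conjugation argument --- which is correct, and is also the paper's final step --- finishes the proof.
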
 
\begin{proof} 
We may suppose that the image of $\phi$ is the image of a section of $Y$ over $N\,(\subseteq M)$\,. 
It is, also, convenient (see \cite{Raw-f}\,) to associate to $\Cal$ the almost $f$-structure $\Fa$ on $Y$ (that is, $\Fa$ is a section of ${\rm End}(TY)$ 
such that $\Fa^3+\Fa=0$\,) which is skew-adjoint and ${\rm ker}(\Fa+{\rm i})=\Cal$\,.\\ 
\indent 
Accordingly, $Y$ is a bundle of skew-adjoint linear $f$-structures on $M$ and therefore the section of $Y$ giving $\phi$ corresponds to a section 
$F$ of ${\rm End}(TM|_N)$\,. Furthermore, standard arguments show that condition (ii) is equivalent to the following:\\ 
\indent 
\quad(iia) $TN\subseteq{\rm ker}(F^2+1)$ and $F|_{TN}=J$\,;\\ 
\indent 
\quad(iib) $\nabla_{FX}F=(\nabla_XF)F$, for any $X\in TN$, where $\nabla$ is the Levi-Civita connection of $M$.\\ 
\indent 
Consequently, for any sections $X$ and $Y$ of $T^{1,0}N$, $\nabla_{\overline{X}}Y$ is a section of ${\rm ker}(F-{\rm i})$\,; equivalently, 
$(N,J)$ endowed with the metric induces from $M$ is $(1,2)$-symplectic, and  
$b(\overline{X},Y)\in{\rm ker}(F-{\rm i})$\,, where $b$ is the second fundamental form of $N\subseteq M$. Together with the fact 
that $b$ is symmetric we deduce that $b(\overline{X},Y)=0$\,, for any $X,Y\in T^{1,0}N$. 
\end{proof} 

\begin{rem} \label{rem:CR_EelSal_(0,2)} 
In (ii) of Theorem \ref{thm:CR_EelSal}\,, if we replace $\widetilde{\Cal}$ by $\Cal$ then similarly we obtain (essentially, \cite[\S5]{Raw-f}\,) 
that $(N,J)$ is integrable. 
\end{rem} 

\indent 
Essentially, the following result was obtained in \cite[\S5]{Raw-f}\,.   

\begin{cor} \label{cor:CR_EelSal_1} 
Let $M$ be a Riemannian manifold endowed with a Riemannian almost CR twistor space $(Y,\Cal)$ given by $\p:Y\to M$. 
Let $(N,J)$ be an almost complex manifold and let $\phi:N\to Y$ be an immersion such that:\\ 
\indent 
\quad{\rm (i)} $\phi(N)$ is orthogonal to the fibres of $\p$\,;\\ 
\indent 
\quad{\rm (ii)} $\phi:(N,J)\to(Y,\Cal)$ is a CR map.\\ 
\indent 
Then the immersion $\p\circ\phi:(N,J)\to M$ is $(1,1)$-geodesic and, in particular, minimal. 
Moreover, the metric on $N$ pulled back by $\p\circ\phi$ from $M$ is K\"ahler, with respect to $J$, and, thus, $\phi$ is pluriharmonic.   
\end{cor}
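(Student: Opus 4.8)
The plan is to deduce the corollary from Theorem \ref{thm:CR_EelSal} and Remark \ref{rem:CR_EelSal_(0,2)}, after which only two points need genuine work: the upgrade from ``$(1,2)$-symplectic'' to ``K\"ahler'', and the pluriharmonicity of $\phi$.

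First I would observe that the hypotheses of the corollary are a strengthening of those of Theorem \ref{thm:CR_EelSal}\,. Indeed, since the fibres of $\p$ are tangent to $\ker\dif\!\p$\,, condition (i) means exactly $\dif\!\phi(TN)\subseteq\H$; as $\H\cap\ker\dif\!\p=0$ this gives hypothesis (i) of the theorem and, moreover, $\dif\!\phi\bigl(T^{0,1}N\bigr)\subseteq\H^{\C}$. Combining this with the CR condition (ii), namely $\dif\!\phi\bigl(T^{0,1}N\bigr)\subseteq\Cal$, we obtain $\dif\!\phi\bigl(T^{0,1}N\bigr)\subseteq\Cal\cap\H^{\C}\subseteq\widetilde{\Cal}$, which is hypothesis (ii) of the theorem. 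Thus Theorem \ref{thm:CR_EelSal} applies: $\p\circ\phi$ is a $(1,1)$-geodesic (hence minimal) immersion and the pulled-back metric is $(1,2)$-symplectic. Since (ii) is phrased with $\Cal$ itself and not $\widetilde{\Cal}$, Remark \ref{rem:CR_EelSal_(0,2)} moreover gives that $(N,J)$ is integrable.

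Next, for an integrable $J$ the exterior derivative of the K\"ahler form $\o$ has components only of types $(2,1)$ and $(1,2)$, which are conjugate to one another; hence the vanishing of the $(1,2)$-part ($(1,2)$-symplecticity) forces $\dif\o=0$, so the pulled-back metric on $N$ is K\"ahler. It then remains to prove that $\phi$ is pluriharmonic. As in the proof of Theorem \ref{thm:CR_EelSal}\,, I would realise $\phi$ as a section of $Y$ over $N\,(\subseteq M)$, so that condition (i) says precisely that this section is \emph{horizontal}. Because such a horizontal section exists, the curvature of the connection defining $\H$ — equivalently the O'Neill integrability tensor $A$ of $\p$ — must restrict trivially to $TN$. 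Combining this with the fact that, for a Riemannian submersion, the second fundamental form $\nabla\dif\!\p$ vanishes on pairs of horizontal vectors (and that the fibres are totally geodesic), a direct computation of $\nabla\dif\!\phi$ in terms of the O'Neill tensors yields $\nabla\dif\!\phi=b^{\H}$, the horizontal lift of the second fundamental form $b$ of $N\subseteq M$. The proof of Theorem \ref{thm:CR_EelSal} already establishes $b(\overline{X},Y)=0$ for $X,Y\in T^{1,0}N$, that is $b^{(1,1)}=0$; therefore $\bigl(\nabla\dif\!\phi\bigr)^{(1,1)}=0$, and since $(N,J)$ is K\"ahler this is exactly the pluriharmonicity of $\phi$.

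The hard part is this last step. The delicate point is controlling the \emph{vertical} component of $\nabla\dif\!\phi$: one must verify that horizontality of the section annihilates the antisymmetric O'Neill contribution $A$ (which is forced by the very existence of the horizontal lift, i.e.\ by the triviality of the connection's curvature along $TN$), leaving only the horizontal part $b^{\H}$; this must be coupled with $\nabla\dif\!\p|_{\H\times\H}=0$ through the composition formula $\nabla\dif(\p\circ\phi)=\dif\!\p\circ\nabla\dif\!\phi+(\nabla\dif\!\p)(\dif\!\phi,\dif\!\phi)$ to keep everything consistent. By contrast, the reductions in the first two paragraphs are routine.
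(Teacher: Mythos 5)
Your proposal is correct, and its first two paragraphs are exactly what the paper's one-line proof (``an immediate consequence of Theorem \ref{thm:CR_EelSal} and Remark \ref{rem:CR_EelSal_(0,2)}'') compresses: orthogonality to the fibres gives both transversality and $\dif\!\phi\bigl(T^{0,1}N\bigr)\subseteq\Cal\cap\H^{\C\!}\subseteq\widetilde{\Cal}$, so the Theorem and the Remark apply simultaneously, and integrability together with $(1,2)$-symplecticity yields the K\"ahler property. Where you part company with the paper is the last step, which you single out as ``the hard part'': in the paper's reading it is immediate, because once $N$ carries the metric pulled back by $\p\circ\phi$, the tensor $\nabla\dif(\p\circ\phi)$ \emph{is} the second fundamental form $b$ of $N\subseteq M$, so the identity $b(\overline{X},Y)=0$ already obtained in the proof of Theorem \ref{thm:CR_EelSal} is literally the pluriharmonic equation, the K\"ahler condition being what makes the notion meaningful (restrictions to complex curves are then harmonic). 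Your O'Neill-tensor argument instead establishes the stronger assertion that the section $\phi:N\to Y$ itself is pluriharmonic, by showing that horizontality kills the vertical component of $\nabla\dif\!\phi$, leaving only $b^{\H}$. That computation is sound: vector fields tangent to the horizontal submanifold $\phi(N)$ have brackets tangent to it, so the integrability tensor of $\H$ vanishes along the image (alternatively, symmetry of $\nabla\dif\!\phi$ together with antisymmetry of the O'Neill tensor on horizontal pairs forces the same conclusion), and it buys a genuinely stronger statement, in the spirit of the paper's later remarks on horizontal lifts. But it is not needed for the corollary as stated, and labelling it the hard part misplaces the weight: all the analytic content lies in Theorem \ref{thm:CR_EelSal}, which you quote correctly.
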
 
\begin{proof} 
This is an immediate consequence of Theorem \ref{thm:CR_EelSal} and Remark \ref{rem:CR_EelSal_(0,2)}\,. 
\end{proof} 

\indent 
In certain cases, the converse of Theorem \ref{thm:CR_EelSal} (and, similarly, of Remark \ref{rem:CR_EelSal_(0,2)}\,) also holds. 
For example, let $M$ be a Riemannian manifold, let $k\in\mathbb{N}$\,, 
$2k\leq\dim M$, and let $Y$ be the bundle of isotropic subspaces of dimension $k$ on $T^{\C\!}M$. 
Then \cite{Pan-tm} (see Remark \ref{rem:CR_holo_constr}\,; cf.\  \cite{LeB-CR}\,, \cite{Ro-LeB_nonrealiz}\,) 
$Y$ is endowed with a \emph{canonical} almost CR structure $\Cal$ which if $k=1$ is integrable \cite{Ro-LeB_nonrealiz} (cf.\ \cite{LeB-CR}\,), 
and if $k\geq2$ then $\Cal$ is integrable if and only if $M$ is conformally flat \cite{Pan-tm}\,. Obviously, $(Y,\Cal)$ is a Riemannian almost CR twistor space.\\ 
\indent  
Essentially, the following result was obtained in \cite[\S5]{Raw-f} (see, also, \cite{Sal-har_holo_maps}\,, for the case $k=1$\,). 

\begin{cor} \label{cor:CR_EelSal_2} 
Let $M$ be a Riemannian manifold, let $k\in\mathbb{N}$\,, $2\leq2k\leq\dim M$, and let $Y$ be the bundle of isotropic subspaces of dimension 
$k$ on $T^{\C\!}M$ endowed with its canonical almost CR structure $\Cal$\,.\\ 
\indent  
Let $(N,J)$ be a $(1,2)$-symplectic almost Hermitian manifold, $\dim N=2k$\,, and let $\phi:N\to M$ be an isometric immersion. 
Then the following assertions are equivalent:\\ 
\indent 
\quad{\rm (i)} $\phi$ is a $(1,1)$-geodesic map;\\ 
\indent 
\quad{\rm (ii)} $(N,J)\to(Y,\widetilde{\Cal}\,)$\,, $x\mapsto\dif\!\phi\bigl(T_x^{0,1}N\bigr)$\,, $(x\in N)$\,, is a CR map. 
\end{cor}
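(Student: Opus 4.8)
The plan is to establish the equivalence of (i) and (ii) by exhibiting it as a specialization of the correspondence already developed in Theorem \ref{thm:CR_EelSal}, together with a converse argument that is available precisely because the twistor space $Y$ here is the \emph{full} bundle of isotropic $k$-planes, so that the fibre $Y_x$ realizes \emph{every} isotropic $k$-dimensional subspace of $T_x^{\C\!}M$. First I would observe that since $\phi:N\to M$ is an isometric immersion with $\dim N=2k$, and $(N,J)$ is almost Hermitian, the map $x\mapsto\dif\!\phi\bigl(T_x^{0,1}N\bigr)$ genuinely lands in $Y$: indeed $\dif\!\phi\bigl(T_x^{0,1}N\bigr)$ is a $k$-dimensional subspace of $T_{\phi(x)}^{\C\!}M$, and its isotropy is exactly the statement that the induced metric is Hermitian (the $(2,0)$ and $(0,2)$ parts of the pulled-back metric vanish on $T^{0,1}N$). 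Thus the lift $\widetilde\phi:N\to Y$ covering $\phi$ is well defined and automatically satisfies $\dif\!\phi(TN)\cap(\ker\dif\!\p)=0$, since $\p\circ\widetilde\phi=\phi$ is an immersion; this is condition (i) of Theorem \ref{thm:CR_EelSal}.

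Next I would translate the two assertions into the language of the proof of Theorem \ref{thm:CR_EelSal}. Condition (ii) here, namely that $\widetilde\phi:(N,J)\to(Y,\widetilde{\Cal}\,)$ is a CR map, is by definition condition (ii) of that theorem applied to the immersion $\widetilde\phi$. The theorem then gives directly that (ii)$\Rightarrow$(i): if $\widetilde\phi$ is CR into $(Y,\widetilde{\Cal}\,)$, then $\p\circ\widetilde\phi=\phi$ is a $(1,1)$-geodesic immersion, which is exactly assertion (i). (The hypothesis that $(N,J)$ is already $(1,2)$-symplectic is consistent with, and partly subsumed by, the conclusion of the theorem that the pulled-back metric is $(1,2)$-symplectic.) So the substance of the corollary is the reverse implication (i)$\Rightarrow$(ii), which is the converse alluded to in the paragraph preceding the statement.

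For the converse I would reuse the $f$-structure dictionary from the proof of Theorem \ref{thm:CR_EelSal}. The section $\widetilde\phi$ of $Y$ over $N$ corresponds to a field $F$ of skew-adjoint $f$-structures on $TM|_N$ with $TN\subseteq\ker(F^2+1)$ and $F|_{TN}=J$; this is condition (iia), which holds automatically from the very definition of the lift $\widetilde\phi$ (the fibre being the \emph{full} isotropic Grassmannian is what lets us prescribe $F$ freely subject only to these algebraic constraints). The content of the theorem's proof is that condition (ii) is equivalent to the pair (iia)+(iib), where (iib) reads $\nabla_{FX}F=(\nabla_XF)F$ for all $X\in TN$. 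Since (iia) is free here, the CR condition (ii) reduces to (iib). The remaining task is therefore to show that, under the present hypotheses, $\phi$ being $(1,1)$-geodesic is equivalent to (iib). I would argue this by decomposing $\nabla F$ into types with respect to $J$ and comparing with the second fundamental form $b$ and the $(1,2)$-symplectic hypothesis: the $(1,1)$-geodesic condition says $b$ has no component in $\ker(F\mp\mathrm i)$ off the diagonal, and a type-by-type reading of $\nabla_{\overline X}(FY)$ for $X,Y\in T^{1,0}N$, using that the fibres of $\p$ are totally geodesic and that $Y\subseteq\mathrm{Gr}(T^{\C\!}M)$ carries the connection-induced horizontal distribution (condition (6) of the Riemannian structure), shows that the vanishing of the relevant component of $\nabla F$ in the fibre directions is precisely (iib).

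I expect the main obstacle to be the bookkeeping in this last step: one must separate the horizontal part of $\dif\widetilde\phi$ (governed by $\nabla F$ along $N$, i.e.\ by how the isotropic $k$-plane rotates, which is the $f$-structure derivative) from the tangential-to-$M$ part (governed by $b$), and show that the $(1,2)$-symplectic assumption on $(N,J)$ exactly kills the term that would otherwise obstruct the equivalence of (iib) with $(1,1)$-geodesicity. Concretely, the identity $\nabla_{\overline X}Y\in\ker(F-\mathrm i)$ extracted in the proof of Theorem \ref{thm:CR_EelSal} is the $(1,2)$-symplectic condition, and it is imposed here as a hypothesis rather than derived; the delicate point is to verify that, with this identity in hand, the CR condition on the lift is genuinely equivalent to—and not merely implied by—the $(1,1)$-geodesic condition, which is where the fullness of $Y$ (every isotropic $k$-plane occurs) is indispensable, as it guarantees the lift is unconstrained and the only remaining condition is the first-order one (iib).
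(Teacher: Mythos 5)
Your skeleton is the paper's own: the printed proof of this corollary is the single sentence that it ``is an immediate consequence of Theorem \ref{thm:CR_EelSal}'', and your unpacking of that sentence --- the lift lands in $Y$ by isotropy of $\dif\!\phi\bigl(T^{0,1}N\bigr)$, hypothesis (i) of the theorem holds because $\p\circ\widetilde\phi=\phi$ is an immersion, (iia) is automatic because $\dim N=2k$ and $Y$ is the \emph{full} bundle of isotropic $k$-planes, (ii)$\Rightarrow$(i) is the theorem itself, and (i)$\Rightarrow$(ii) is obtained by reversing the theorem's computation with the $(1,2)$-symplectic hypothesis now assumed rather than concluded --- is exactly what the paper leaves to the reader.

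There is, however, one concrete step in your plan that fails as stated: the claim that, under the present hypotheses, ``$(1,1)$-geodesic $\Leftrightarrow$ (iib)''. Read literally, as an identity of endomorphisms of $TM|_N$, condition (iib) is strictly stronger than the CR condition whenever $2k<\dim M$. Indeed, for the lift at hand $\ker(F-{\rm i})=\dif\!\phi\bigl(T^{1,0}N\bigr)$, $\ker(F+{\rm i})=\dif\!\phi\bigl(T^{0,1}N\bigr)$, and $\ker F$ is the complexified normal bundle; evaluating $\nabla_{F\overline{Z}}F=(\nabla_{\overline{Z}}F)F$, for $Z\in T^{1,0}N$, on the three eigenbundles shows that (iib) amounts to $(\nabla_{\overline{Z}}F)|_{\ker(F-{\rm i})\oplus\ker F}=0$. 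On $\ker(F-{\rm i})$ this gives exactly $(1,2)$-symplectic plus $(1,1)$-geodesic, as you expect; but on $\ker F$ it gives $(\nabla_{\overline{Z}}F)W=\dif\!\phi\bigl(JA_W(\overline{Z})\bigr)=0$ for every normal $W$ (here $A_W$ is the shape operator), i.e.\ $A_W(\overline{Z})=0$, which kills the $(2,0)$-part of the second fundamental form as well and thus forces $\phi$ to be totally geodesic; a minimal, non-superminimal surface in $\R^4$, lifted to the bundle of isotropic lines, is a counterexample to ``CR $\Rightarrow$ (iib)''. The repair is to use the correct translation of the CR condition: the vertical $(1,0)$-space of the isotropic Grassmannian fibre at $\ell=\ker(F+{\rm i})$ consists of those variations $G$ with $G|_{\ker(F-{\rm i})}=0$ and $G(\ker F)\subseteq\ker(F-{\rm i})$, so in the normal directions the CR condition only requires $(\nabla_{\overline{Z}}F)(\ker F)\subseteq\ker(F-{\rm i})$, which unwinds to $\langle b(\overline{Z},Y),W\rangle=0$, i.e.\ $(1,1)$-geodesicity again; in particular it is implied by the $\ker(F-{\rm i})$ condition, and the equivalence of the corollary then closes (equivalently: read (iib) as an identity on $\ker(F^2+1)$ only, which is also the charitable reading of the phrase ``(ii) is equivalent to (iia) and (iib)'' in the proof of Theorem \ref{thm:CR_EelSal}, where only the harmless direction is used). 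Note finally that fullness of $Y$ is what makes (iia) automatic; it plays no role in repairing this point, so your closing appeal to fullness is aimed at the wrong difficulty.
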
 
\begin{proof} 
This is an immediate consequence of Theorem \ref{thm:CR_EelSal}\,. 
\end{proof} 

\begin{exm} 
Let $k,n\in\mathbb{N}$\,, $2\leq2k\leq n$\,, let $(N,J)$ be a $(1,2)$-symplectic almost Hermitian manifold, $\dim N=2k$\,, 
and let $\phi:N\to\R^n$ be an isometric immersion. Denote by ${\rm Gr}_k^0(n,\C\!)$ the space of isotropic complex vector subspaces of dimension $k$ 
of~$\C^{\!n}$.\\ 
\indent 
Then $\phi$ is $(1,1)$-geodesic (and, in particular, harmonic) if an only if it induces an antiholomorphic map from $(N,J)$ to ${\rm Gr}_k^0(n,\C\!)$ 
(the case $k=1$ is classical, see \cite{Sal-har_holo_maps} and the references therein). This quickly gives the Weierstrass representation \cite{ArPiSo} 
of pluriharmonic immersions from K\"ahler manifolds to Euclidean spaces.  
\end{exm}    

\indent 
Some of the results of this section admit more general formulations which, however, require the following definitions. 

\begin{defn}
Let $(M,g)$ and $(N,h)$ be Riemannian manifolds, $(N,h)$ equipped with a skew-adjoint almost $f$-structure $F$. 
We say that a map $\phi : (N,h,F) \to (M,g)$ is
\begin{itemize}
\item[i)] pseudo-harmonic if
$$
\trace\left( \nabla d\phi \Big|_{H\times H} \right) =0 ,
$$
where $H = \ker( F^2 + \mathrm{Id} )$.
\item[ii)] pseudo-(1,1)-geodesic if
$$
\nabla d\phi (\overline{Z},Z) =0 ,
$$
for all $Z\in \ker (F-{\rm i})$.
\end{itemize}
\end{defn}

\begin{defn}
Let $(N,h)$ be a Riemannian manifold, equipped with a skew-adjoint almost $f$-structure $F$.
We say that 
\begin{itemize}
\item[i)] $(N,h,F)$ is CR-cosymplectic if the restriction of $\nabla^{N} J$ to $\ker (F + {\rm i}) \otimes \ker (F-{\rm i})$ is trace-free (with respect to the metric $h$), i.e.
$$
\sum_{i} \nabla_{\overline{Z_i}} Z_i \in \ker (F-{\rm i}) ,
$$
where $\{ Z_{i}\}$ is an orthonormal basis of $\ker (F-{\rm i})$.
\item[ii)] $(N,h,F)$ is CR-$(1,2)$-symplectic if $\nabla^{N} J$ maps $\ker (F + {\rm i}) \otimes \ker (F-{\rm i})$ into $\ker (F-{\rm i})$.
\end{itemize}
\end{defn} 

\begin{cor} \label{cor:Raw_5.6} 
Let $M$ be a Riemannian manifold endowed with a Riemannian almost CR twistor space $(Y,\Cal)$ given by $\p:Y\to M$.\\ 
\indent  
Let $N$ be a Riemannian manifold endowed with a skew-adjoint almost $f$-structure $J$ such that $(N,h,J)$ is a CR-cosymplectic Riemannian manifold.\\ 
\indent 
If $\phi:\bigl(N,{\rm ker}(J+{\rm i})\bigr)\to\bigl(Y,\widetilde{\Cal}\,\bigr)$ is a CR map then $\p\circ\phi$ is pseudo-harmonic. 
\end{cor}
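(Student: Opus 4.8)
The plan is to follow the proof of Theorem~\ref{thm:CR_EelSal}, but to take a trace over $H=\ker(J^2+\mathrm{Id})$ rather than argue pointwise, and then to invoke the elementary fact that a \emph{real} tangent vector lying in the ${\rm i}$-eigenspace of a skew-adjoint $f$-structure must vanish. Write $\psi:=\p\circ\phi$, let $\nabla^{\psi}$ be the pull-back to $N$ of the Levi-Civita connection of $M$, and set up the same algebraic dictionary as in Theorem~\ref{thm:CR_EelSal}: since $Y$ is a bundle of skew-adjoint $f$-structures on $M$, the point $\phi(x)$ determines a skew-adjoint $f$-structure $F=F_{\phi(x)}$ on $T_{\psi(x)}M$, i.e.\ a section $F$ of $\psi^{*}\mathrm{End}(TM)$. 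Using $\widetilde{\Cal}=\bigl(\Cal\cap\H^{\C\!}\bigr)\oplus(\ker d\p)^{1,0}$ and condition (6), the CR condition on $\phi$ should yield: (a) $d\psi$ sends $\ker(J\pm{\rm i})$ into $\ker(F\pm{\rm i})$ (the horizontal part of $d\phi$); and (b) for each $\overline{Z}\in\ker(J+{\rm i})$ the skew-adjoint endomorphism $\nabla^{\psi}_{\overline{Z}}F$ annihilates $\ker(F-{\rm i})$ (the vertical part, lying in $(\ker d\p)^{1,0}$). From these I would extract the one infinitesimal fact needed below: for $Z\in\ker(J-{\rm i})$, differentiating $F\,d\psi(Z)={\rm i}\,d\psi(Z)$ along $\overline{Z}$ and discarding the $\nabla^{\psi}F$-term via (b) gives $\nabla^{\psi}_{\overline{Z}}\bigl(d\psi(Z)\bigr)\in\ker(F-{\rm i})$.

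Then I would compute the trace. For a local orthonormal frame $\{Z_j\}$ of $\ker(J-{\rm i})$ one has $\trace\bigl(\nabla d\psi|_{H\times H}\bigr)=2\sum_j\nabla d\psi(Z_j,\overline{Z_j})$. Expanding $\nabla d\psi(\overline{Z_j},Z_j)=\nabla^{\psi}_{\overline{Z_j}}\bigl(d\psi(Z_j)\bigr)-d\psi\bigl(\nabla^{N}_{\overline{Z_j}}Z_j\bigr)$ (using symmetry of the Hessian) and summing, the first terms lie in $\ker(F-{\rm i})$ by the previous step; and since $(N,h,J)$ is CR-cosymplectic, $\sum_j\nabla^{N}_{\overline{Z_j}}Z_j\in\ker(J-{\rm i})$, so its image under $d\psi$ lies in $\ker(F-{\rm i})$ by (a). Hence $\sum_j\nabla d\psi(Z_j,\overline{Z_j})\in\ker(F-{\rm i})$. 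But this sum is real, because each $\nabla d\psi(Z_j,\overline{Z_j})$ equals its conjugate $\nabla d\psi(\overline{Z_j},Z_j)$, and a real vector in $\ker(F-{\rm i})\cap\ker(F+{\rm i})=\{0\}$ must vanish. Therefore $\trace\bigl(\nabla d\psi|_{H\times H}\bigr)=0$, that is, $\psi$ is pseudo-harmonic.

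The hard part is step (b): for a genuine skew-adjoint $f$-structure $J$ and a map $\phi$ that need not be an immersion, I must check that the CR condition really forces $\nabla^{\psi}_{\overline{Z}}F$ to kill $\ker(F-{\rm i})$. This rests on identifying the vertical component of $d\phi$ with the covariant derivative of $F$ — which is precisely where condition (6) enters — and then reading off the fibrewise $(1,0)$-condition; the desired annihilation follows from an eigenspace argument, since a skew-adjoint infinitesimal variation of $F$ whose image lies in $\ker(F-{\rm i})$ and which anti-commutes with $F$ in the appropriate sense must annihilate $\ker(F-{\rm i})$. Granting this dictionary (the analogue of the ``standard arguments'' invoked in Theorem~\ref{thm:CR_EelSal}), the remaining trace-and-reality argument is routine.
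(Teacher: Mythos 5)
Your proposal is correct and takes essentially the same route as the paper: the same dictionary identifying $\phi$ with a section $F$ of $(\p\circ\phi)^*\bigl({\rm End}(TM)\bigr)$, the same key consequence of the CR hypothesis (your conditions (a)--(b) are just a repackaging of the paper's identity $\bigl((\p\circ\phi)^*\nabla^M\bigr)_{JX}F=\bigl(\bigl((\p\circ\phi)^*\nabla^M\bigr)_XF\bigr)F$), and the same conclusion that the trace of $\nabla\dif(\p\circ\phi)$ over $\ker(J^2+1)$ lies in $\ker(F-{\rm i})$. The final reality argument you spell out (a real vector in $\ker(F-{\rm i})\cap\ker(F+{\rm i})=\{0\}$ vanishes) is precisely what the paper compresses into ``the proof quickly follows''.
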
 
\begin{proof} 
Similarly to the proof of Theorem \ref{thm:CR_EelSal}\,, the map $\phi$ corresponds to a section $F$ of 
$(\p\circ\phi)^*(Y)\subseteq{\rm End}\bigl((\p\circ\phi)^*(TM)\bigr)$\,. Also, as $\phi:\bigl(N,{\rm ker}(J+{\rm i})\bigr)\to\bigl(Y,\widetilde{\Cal}\,\bigr)$ 
is a CR map, we have $\bigl((\p\circ\phi)^*\bigl(\nabla^M\bigr)\bigr)_{JX}F=\bigl(\bigl((\p\circ\phi)^*\bigl(\nabla^M\bigr)\bigr)_XF\bigr)F$, 
for any $X\in{\rm ker}(J^2+1)$\,, where $\nabla^M$ is the Levi-Civita connection of $M$.\\ 
\indent  
Thus, $\bigl((\p\circ\phi)^*\bigl(\nabla^M\bigr)\bigr)_{\overline{X}}\bigl(\dif(\p\circ\phi)(Y)\bigr)$ is a section of ${\rm ker}(F-{\rm i})$ 
for any sections $X$ and $Y$ of ${\rm ker}(J-{\rm i})$\,, where $\dif(\p\circ\phi)$ is seen as a section of ${\rm Hom}\bigl(TN,(\p\circ\phi)^*(TM)\bigr)$\,.\\ 
\indent  
Consequently, the trace of $\nabla\!\dif(\p\circ\phi)$ restricted to ${\rm ker}(J^2+1)$ takes values in ${\rm ker}(F-{\rm i})$ and 
the proof quickly follows. 
\end{proof} 

\indent 
Corollary \ref{cor:Raw_5.6} gives \cite[Theorem 5.6]{Raw-f} if $J$ is an almost Hermitian structure. 

\begin{cor}
Let $(M,g)$ be a Riemannian manifold, $k\in \mathbb N$, $2\leq 2k\leq \dim M$, and $Y$ the bundle of isotropic subspaces of dimension $k$ on $T^\C M$  endowed with its canonical CR structure $\tilde{\mathcal C}$. 
Let $(N,h)$ be a Riemannian manifold endowed with a skew-adjoint almost $f$-structure $F$ of rank $2k$ and such that $(N,h,F)$ is CR-(1,2)-symplectic.\\
Let $\varphi : N \to M$ be an immersion such that $(\varphi^\star h)^{(2,0)}=0$. Then the following assertions are equivalent :
\begin{enumerate}
\item[i)] $\phi$ is pseudo-(1,1)-geodesic.

\item[ii)] $\tilde\varphi : (N,F)\to (Y,\tilde{\mathcal C})$, $x \mapsto d\varphi\Big(\ker(F+i)\Big)$ is a CR map. 
\end{enumerate}
\end{cor}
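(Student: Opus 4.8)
The plan is to mirror the proof of Corollary~\ref{cor:Raw_5.6} and Corollary~\ref{cor:CR_EelSal_2}, reducing the equivalence to the algebraic characterisation of the CR map condition in terms of the pulled-back $f$-structure, while using the CR-$(1,2)$-symplectic hypothesis to control the intrinsic term. As in those proofs, the map $\tilde\varphi$ into the bundle $Y$ of isotropic $k$-planes corresponds, along $\varphi$, to a section $\tilde F$ of $\varphi^*(Y)\subseteq{\rm End}\bigl(\varphi^*(TM)\bigr)$, namely $\tilde F$ is the skew-adjoint almost $f$-structure on $\varphi^*(TM)$ whose ${\rm ker}(\tilde F+{\rm i})$ is the isotropic plane $\dif\!\varphi\bigl({\rm ker}(F+{\rm i})\bigr)$. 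The condition $(\varphi^\star h)^{(2,0)}=0$ guarantees that $\dif\!\varphi$ carries ${\rm ker}(F+{\rm i})$ to an isotropic subspace of $T^{\C\!}M$, so that $\tilde F$ is well defined, and that $\dif\!\varphi$ intertwines $F$ and $\tilde F$ on ${\rm ker}(F^2+{\rm Id})$; this is the place where the $(2,0)$-vanishing of the pulled-back metric is indispensable.

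Next I would record the standard reformulation (as in the passage from (ii) to (iib) in Theorem~\ref{thm:CR_EelSal}) that $\tilde\varphi$ is a CR map into $(Y,\tilde{\mathcal C})$ precisely when $\tilde F$ satisfies the Codazzi-type identity
\begin{equation*}
\bigl(\varphi^*(\nabla^M)\bigr)_{FX}\tilde F=\bigl(\bigl(\varphi^*(\nabla^M)\bigr)_X\tilde F\bigr)\tilde F,\qquad X\in{\rm ker}(F^2+{\rm Id}).
\end{equation*}
Applying this to $X\in{\rm ker}(F-{\rm i})$ shows, exactly as before, that $\bigl(\varphi^*(\nabla^M)\bigr)_{\overline X}\bigl(\dif\!\varphi(Z)\bigr)$ lies in ${\rm ker}(\tilde F-{\rm i})$ for all $X,Z\in{\rm ker}(F-{\rm i})$. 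Decomposing $\nabla\!\dif\!\varphi(\overline X,Z)$ into its tangential part, governed by $\nabla^N$, and its normal part, which is the second fundamental form, I would separate the two contributions: the tangential term is controlled by the hypothesis that $(N,h,F)$ is CR-$(1,2)$-symplectic, which says precisely that $\nabla^N J$ sends ${\rm ker}(F+{\rm i})\otimes{\rm ker}(F-{\rm i})$ into ${\rm ker}(F-{\rm i})$, i.e.\ $\nabla^N_{\overline X}Z\in{\rm ker}(F-{\rm i})$.

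With these ingredients, the equivalence follows by comparing where $\nabla\!\dif\!\varphi(\overline X,Z)$ takes its values. If $\varphi$ is pseudo-$(1,1)$-geodesic, then $\nabla\!\dif\!\varphi(\overline X,Z)=0$, whence $\bigl(\varphi^*(\nabla^M)\bigr)_{\overline X}\bigl(\dif\!\varphi(Z)\bigr)=\dif\!\varphi\bigl(\nabla^N_{\overline X}Z\bigr)\in{\rm ker}(\tilde F-{\rm i})$ by the CR-$(1,2)$-symplectic condition, giving the Codazzi identity and hence (ii). Conversely, (ii) forces $\bigl(\varphi^*(\nabla^M)\bigr)_{\overline X}\bigl(\dif\!\varphi(Z)\bigr)\in{\rm ker}(\tilde F-{\rm i})$; subtracting the tangential part $\dif\!\varphi\bigl(\nabla^N_{\overline X}Z\bigr)$, which lies in ${\rm ker}(\tilde F-{\rm i})$ by hypothesis, leaves the second fundamental form $\nabla\!\dif\!\varphi(\overline X,Z)\in{\rm ker}(\tilde F-{\rm i})$; since this form is symmetric and $\nabla\!\dif\!\varphi(\overline X,\overline Z)$ lies in the conjugate space ${\rm ker}(\tilde F+{\rm i})$, a conjugation argument forces $\nabla\!\dif\!\varphi(\overline X,Z)=0$, which is pseudo-$(1,1)$-geodesy. \textbf{The main obstacle} I anticipate is not the Codazzi computation, which is routine once $\tilde F$ is set up, but the careful bookkeeping of the two symmetry/reality arguments: verifying that $(\varphi^\star h)^{(2,0)}=0$ is exactly what makes $\tilde F$ well defined and $F$-equivariant, and then using the symmetry of the second fundamental form together with isotropy to upgrade ``lies in ${\rm ker}(\tilde F-{\rm i})$'' to ``vanishes,'' since $F$ is only an $f$-structure (with a nontrivial kernel) rather than an almost complex structure, so one must confine the whole argument to the distribution ${\rm ker}(F^2+{\rm Id})$ and track the kernel of $F$ separately.
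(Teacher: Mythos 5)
Your overall strategy is the right one, and it is exactly how the paper intends this corollary to be proved (the paper gives no explicit proof; it is meant to follow by rerunning the arguments of Theorem~\ref{thm:CR_EelSal} and Corollary~\ref{cor:Raw_5.6}): associate to $\tilde\varphi$ the section $\tilde F$ of $\varphi^*(Y)\subseteq{\rm End}\bigl(\varphi^*(TM)\bigr)$, use CR-$(1,2)$-symplecticity to identify the tangential term $\dif\!\varphi\bigl(\nabla^N_{\overline X}Z\bigr)$ as lying in $\ker(\tilde F-{\rm i})$, and use symmetry of $\nabla\dif\!\varphi$ plus conjugation to upgrade ``lies in $\ker(\tilde F-{\rm i})$'' to ``vanishes''. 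Your direction (ii)$\Rightarrow$(i) is correct (modulo the small slip that the relevant conjugate object is $\overline{\nabla\dif\!\varphi(\overline Z,X)}=\nabla\dif\!\varphi(Z,\overline X)$, not $\nabla\dif\!\varphi(\overline X,\overline Z)$).

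There is, however, a genuine error in your characterisation of condition (ii), and it breaks your direction (i)$\Rightarrow$(ii). You claim that $\tilde\varphi$ is a CR map if and only if the full Codazzi identity $\bigl(\varphi^*\nabla^M\bigr)_{FX}\tilde F=\bigl(\bigl(\varphi^*\nabla^M\bigr)_X\tilde F\bigr)\tilde F$ holds for all $X\in\ker(F^2+{\rm Id})$. On eigenspaces, for $W\in\ker(F+{\rm i})$ this identity says that $\nabla_W\tilde F$ vanishes on $\ker(\tilde F-{\rm i})\oplus\ker\tilde F$. But the vertical part of $\tilde{\mathcal C}$ is $(\ker\dif\!\p)^{1,0}$, the holomorphic tangent bundle of the fibres, and here the fibre is the \emph{full} Grassmannian of isotropic $k$-planes: at $q=\ker(\tilde F+{\rm i})$ its holomorphic tangent space, read as endomorphisms, requires vanishing only on $\ker(\tilde F-{\rm i})$; the components $\ker\tilde F\to\ker(\tilde F-{\rm i})$ (equivalently, via skew-adjointness, $\ker(\tilde F+{\rm i})\to\ker\tilde F$) are \emph{free} directions of $T^{1,0}$, of complex dimension $k(n-2k)$. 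So precisely when $2k<\dim M$ (which is the whole point of this corollary, as opposed to Corollary~\ref{cor:CR_EelSal_2}) the Codazzi identity is strictly stronger than the CR condition. A concrete counterexample to your equivalence: $M=\R^3$, $k=1$, $N$ a minimal but not totally geodesic surface with its complex structure. Then (i) and all the hypotheses hold, and (ii) holds as well (this is the classical holomorphicity of the Gauss map), yet the Codazzi identity fails: for the unit normal $\nu$ one has $(\nabla_{\overline X}\tilde F)(\nu)=\tilde F S\overline X\in\ker(\tilde F-{\rm i})$, where $S$ is the shape operator, and this is nonzero off the totally geodesic locus. Consequently, if your characterisation of (ii) were correct the corollary itself would be false; and in your proof of (i)$\Rightarrow$(ii) what you actually establish, namely $\nabla_{\overline X}\bigl(\dif\!\varphi(Z)\bigr)\in\ker(\tilde F-{\rm i})$, i.e.\ $(\nabla_{\overline X}\tilde F)|_{\ker(\tilde F-{\rm i})}=0$, does \emph{not} ``give the Codazzi identity''. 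The repair is to prove the correct characterisation: (ii) holds if and only if $(\nabla_{\overline X}\tilde F)|_{\ker(\tilde F-{\rm i})}=0$ for all $X\in\ker(F-{\rm i})$; with that in place, both of your implications close up. This is exactly the $f$-structure bookkeeping you flag in your final sentence but do not carry out: the Codazzi characterisation is valid only in the almost Hermitian case $2k=\dim M$ of Theorem~\ref{thm:CR_EelSal}.
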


\begin{cor}
Let $(M,g)$ be an oriented Riemannian $(2n+2)$-manifold and $Y$ the bundle of maximal isotropic subspaces on $T^\C M$  endowed with its canonical CR structure $\tilde{\mathcal C}$. 
Let $(N,h,F)$ be a Riemannian manifold endowed with a skew-adjoint almost $f$-structure $F$ of rank $2n$ such that $(N,h,F)$ is CR-(1,2)-symplectic.
Let $\phi : N \to M$ be an immersion such that $(\phi^\star h)^{(2,0)}=0$. Then the following assertions are equivalent :
\begin{enumerate}
\item[i)] $\phi$ is pseudo-$(1,1)$-geodesic

\item[ii)] $\tilde\phi : (N,F)\to (Y,\tilde{\mathcal C})$ is a CR map. 
\end{enumerate}
\end{cor}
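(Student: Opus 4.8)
The plan is to run the scheme of Theorem~\ref{thm:CR_EelSal} and Corollary~\ref{cor:Raw_5.6}: encode $\tilde\phi$ as a field of linear structures along $\phi$, translate the requirement that $\tilde\phi$ be a CR map into a first order identity for that field, and then extract the pseudo-$(1,1)$-geodesic equation from it using the symmetry of $\nabla\!\dif\!\phi$. Compared with the preceding corollary the one genuinely new point is that the fibres of $Y$ now parametrise \emph{maximal} isotropic subspaces, of dimension $n+1$, whereas $\dif\!\phi\bigl(\ker(F+{\rm i})\bigr)$ is only $n$-dimensional; so before anything else one must say what $\tilde\phi$ is.

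Since $\dim M=2n+2$, a maximal isotropic subspace of $T_x^{\C\!}M$ is the $(-{\rm i})$-eigenspace of an orthogonal complex structure of $T_xM$, and $Y$ is the corresponding bundle of such structures, equipped with the canonical CR structure $\tilde{\mathcal C}=\bigl(\Cal\cap\H^{\C\!}\bigr)\oplus(\ker\dif\!\p)^{1,0}$. Accordingly I would define $\tilde\phi$ through the section $\J$ of orthogonal complex structures along $\phi$ determined by: on $\dif\!\phi\bigl(\ker(F^2+1)\bigr)$, $\J$ is the push-forward of $F$; on the orthogonal complement of $\dif\!\phi\bigl(\ker(F^2+1)\bigr)$ in $T_{\phi(x)}M$ --- a $2$-plane, since $\dim M-2n=2$ --- $\J$ is the rotation singled out by the orientation of $M$ together with the complex orientation of $\dif\!\phi\bigl(\ker(F^2+1)\bigr)$. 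The isotropy hypothesis $(\phi^{\star}g)^{(2,0)}=0$ is precisely what makes the push-forward of $F$ an orthogonal complex structure on its image (equivalently, makes $\dif\!\phi\bigl(\ker(F+{\rm i})\bigr)$ isotropic), so that $\J$ is well defined and $\tilde\phi(x)=\ker(\J_x+{\rm i})$ is maximal isotropic with $\dif\!\phi\bigl(\ker(F+{\rm i})\bigr)\subseteq\tilde\phi(x)$.

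With $\tilde\phi$ presented as a section $\J$ of $\phi^*Y\subseteq{\rm End}(\phi^*TM)$, the computation of Theorem~\ref{thm:CR_EelSal} and Corollary~\ref{cor:Raw_5.6} carries over. For (ii)$\,\Rightarrow\,$(i): being a CR map into $(Y,\tilde{\mathcal C})$ yields $\bigl(\phi^*\nabla^M\bigr)_{FX}\J=\bigl(\bigl(\phi^*\nabla^M\bigr)_X\J\bigr)\J$ for $X\in\ker(F^2+1)$, whence $\bigl(\phi^*\nabla^M\bigr)_{\overline Z}\bigl(\dif\!\phi(Z')\bigr)\in\ker(\J-{\rm i})$ for all $Z,Z'\in\ker(F-{\rm i})$; since $\nabla\!\dif\!\phi$ is symmetric and $(N,h,F)$ is CR-$(1,2)$-symplectic, this forces $\nabla\!\dif\!\phi(\overline Z,Z)=0$, i.e.\ $\phi$ is pseudo-$(1,1)$-geodesic. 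For the converse (i)$\,\Rightarrow\,$(ii) I would argue as in the discussion preceding Corollary~\ref{cor:CR_EelSal_2}: because $Y$ is the \emph{full} canonical bundle of maximal isotropics, its tautological embedding makes the CR condition on $\tilde\phi$ equivalent to the same identity for $\J$, so the implication just run can be reversed.

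The step I expect to be the main obstacle is the control of the extra, $(n+1)$-st isotropic direction that is absent from the preceding corollary. One has to verify that the orientation-determined completion of $\dif\!\phi\bigl(\ker(F+{\rm i})\bigr)$ to a maximal isotropic is compatible with $\tilde{\mathcal C}$, and that the covariant derivative of $\J$ in the $\overline Z$-directions maps this extra direction into $\ker(\J-{\rm i})$, so that it contributes nothing to the trace/geodesic identity; this is exactly where the orientation and the vanishing of $(\phi^{\star}g)^{(2,0)}$ must be used together, and where one must check that the weaker CR-$(1,2)$-symplectic hypothesis, rather than genuine $(1,2)$-symplecticity, already suffices.
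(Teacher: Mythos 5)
Your architecture is the right one, and it is the one the paper implicitly intends (this corollary is stated without proof, as following by the scheme of Theorem \ref{thm:CR_EelSal} and Corollary \ref{cor:Raw_5.6}): define $\tilde\phi$ as the Gauss lift, i.e.\ the section $\J$ of orthogonal complex structures along $\phi$ obtained by pushing $F$ forward --- orthogonal precisely because $(\phi^{\star}g)^{(2,0)}=0$ --- and rotating the orthogonal $2$-plane by the orientation; the CR condition then unwinds to $\bigl(\phi^*\nabla^M\bigr)_{FX}\J=\bigl(\bigl(\phi^*\nabla^M\bigr)_X\J\bigr)\J$ on $\ker(F^2+1)$, and your derivation of (ii)$\Rightarrow$(i) from this (CR-$(1,2)$-symplecticity to absorb the tangential term, symmetry of $\nabla\dif\phi$ together with $\ker(\J-{\rm i})\cap\ker(\J+{\rm i})=0$ to conclude) is complete and parallels the proof of Theorem \ref{thm:CR_EelSal}.

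The gap is in (i)$\Rightarrow$(ii), exactly at the point you flag as ``the main obstacle'' and then leave unverified. Writing $\nabla$ for $\phi^*\nabla^M$, the identity above is equivalent to: $\nabla_{\overline Z}$ preserves the whole bundle $\ker(\J-{\rm i})$, for every $\overline Z\in\ker(F+{\rm i})$. Your hypotheses only give this on the corank-one subbundle $\dif\phi\bigl(\ker(F-{\rm i})\bigr)$; writing $\ker(\J-{\rm i})=\dif\phi\bigl(\ker(F-{\rm i})\bigr)\oplus\ell$, the derivative $\nabla_{\overline Z}\ell$ is not controlled by pseudo-$(1,1)$-geodesicity or CR-$(1,2)$-symplecticity, and invoking the ``tautological embedding'' produces no such control. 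The missing argument is short, and it is the one place where maximality is genuinely used: since $\nabla$ is metric and $\ker(\J-{\rm i})$ is isotropic, the form $(u,v)\mapsto\langle\nabla_{\overline Z}u,v\rangle$ on $\ker(\J-{\rm i})$ is skew-symmetric; hence, once $\nabla_{\overline Z}\dif\phi(Z')\in\ker(\J-{\rm i})$ is known for all $Z'\in\ker(F-{\rm i})$, one gets $\langle\nabla_{\overline Z}\ell,\dif\phi(Z')\rangle=-\langle\ell,\nabla_{\overline Z}\dif\phi(Z')\rangle=0$ and $\langle\nabla_{\overline Z}\ell,\ell\rangle=\tfrac12\,\overline Z\langle\ell,\ell\rangle=0$, so $\nabla_{\overline Z}\ell\perp\ker(\J-{\rm i})$; and because a \emph{maximal} isotropic subspace equals its own orthogonal complement, this yields $\nabla_{\overline Z}\ell\in\ker(\J-{\rm i})$, closing the implication. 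Note that this step uses only maximal isotropy and metricity of the connection: the orientation and the hypothesis $(\phi^{\star}g)^{(2,0)}=0$ --- which you suggest ``must be used together'' here --- in fact enter only in making $\ell$, hence $\tilde\phi$, a well-defined smooth lift.
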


\begin{exm}
Any orientable $3$-dimensional manifold admits a foliation by Riemann surfaces, hence a Levi flat CR structure with $(\nabla F)_{H} =0$, where $H= {\ker}(F^2 + \mathrm{Id})$. Therefore, any $3$-dimensional manifold can be equipped with a CR-(1,2)-symplectic $f$-structure. \\
In particular, by the previous corollary, Eells-Salamon's bijective correspondence between $\phi$  pseudo-harmonic and $\tilde\phi$ CR map, extends to isometric immersions $\phi : (N^3,h)\to (M^4,g)$.
\end{exm}

\begin{exm}
An almost contact manifold $(N,h,F)$ is called nearly cosymplectic if $\nabla F$ is skew-symmetric. This condition implies that the associated $f$-structure is CR-(1,2)-symplectic.\\
If $N$ is a hypersurface of a nearly Kahler manifold $M$, then it inherits an almost contact structure which, under some conditions, is nearly cosymplectic. The inclusion map $\phi$ will then satisfy the conditions of the last corollary.
\end{exm}

\section{Riemannian twistorial structures} 

\indent 
Let $E$ be a Riemannian vector bundle over a manifold $M$. A subbundle $Y\subseteq{\rm Gr}\bigl(E^{\C\!}\bigr)$ such that 
$Y_x\subseteq{\rm Gr}\bigl(E_x^{\C\!}\bigr)$ is an Euclidean $Y_x$-structure on $E_x$\,, for any $x\in M$, is called 
a \emph{Riemannian $Y$-structure on $E$}\,. Suppose, further, that $M$ is Riemannian and there is a morphism of vector bundles 
$\r:E\to TM$ which is an orthogonal projection at each point. If $\r_x$ and $\s_x$ induce a $Y_x$-structure on $T_xM$, for any $x\in M$, 
where $\s:Y\to{\rm Gr}\bigl(E^{\C\!}\bigr)$ is the inclusion, then we say that \emph{$\r$ and $\s$ induce a Riemannian $Y$-structure on $M$}.\\ 
\indent 
Let $E$ be, further, endowed with a \emph{compatible} connection $\nabla$\,; that is, $\nabla$ preserves the 
Riemannian structure of $E$ and induces a connection $\H\subseteq TY$ on $Y$. Then we can construct an almost co-CR structure $\Cal$ on $Y$ 
(that is, $\Cal\subseteq T^{\C\!}Y$ is a complex distribution such that $\Cal+\overline{\Cal}=T^{\C\!}Y$), as follows. 
Firstly, let $\mathcal{B}\subseteq\H^{\C\!}$ be such that $\dif\!\p(\mathcal{B}_y)=\r\bigl(\s(y)\bigr)$\,, for any $y\in Y$, where $\p:Y\to M$ is the projection. 
Then $\Cal=\mathcal{B}\oplus({\rm ker}\dif\!\p)^{0,1}$ is an almost co-CR structure on $Y$.\\ 
\indent 
Let $T=\dif^{\nabla}\!\iota$ be the $\iota\,$-torsion of $\nabla$, where $\iota$ is the inclusion of $TM$ into $E$ as the orthogonal complenet of 
${\rm ker}\r$\,. If $(X,Y,Z)\mapsto\langle\,T(X,Y),Z\,\rangle$ is a section of $\Lambda^3T^*M$, where $\langle\cdot,\cdot\rangle$ is the metric on $E$\,, 
then $T$ is called \emph{totally antisymmetric}.\\ 
\indent 
The following definition specializes notions of \cite{PanWoo-sd}\,, \cite{LouPan-II}\,; compare, also, \cite{Raw-f}\,, \cite{LouPan}\,, \cite{fq_2}\,. 

\begin{defn} \label{defn:Riemannian_twist_str} 
If the $\iota\,$-torsion of $\nabla$ is totally antisymmetric we say that $(E,\r,\s,\nabla)$ is a \emph{Riemannian almost twistorial structure (on $M$)}\,.  
If, also, $\Cal$ is integrable we say that $(E,\r,\s,\nabla)$ is a \emph{Riemannian twistorial structure}. 
\end{defn}  

\indent 
Note that, similarly to \cite{Pan-integrab_co-cr_q}\,, if the automorphism group of the typical fibre of $Y$ acts tranzitively on it, 
the integrability of $\Cal$ is determined by the curvature form and the $\r$-torsion of $\nabla$.\\  
\indent 
Let $M$ be endowed with a Riemannian twistorial structure $(E,\r,\s,\nabla)$\,. Suppose that  
there exists a complex manifold $Z$ endowed with a conjugation and a surjective submersion $\phi:Y\to Z$ intertwining the conjugations 
and such that $\Cal=(\dif\!\phi)^{-1}\bigl(T^{0,1}Z\bigr)$ (we take $\phi$ such that $({\rm ker}\dif\!\phi)^{\C\!}=\Cal\cap\overline{\Cal}$\,). 
We call $Z$ the \emph{twistor space} of $(E,\r,\s,\nabla)$\,. 

\begin{rem} 
Let $M$ be an oriented Riemannian manifold, $\dim M=n$\,. Let $Y^0$ be a compact complex manifold endowed with a conjugation 
and let $G\subseteq{\rm SO}(n)$ be the automorphism group of an Euclidean $Y^0$-structure on $\R^n$ (endowed with its canonical metric). 
If $G$ contains the holonomy group of $M$ then we can build $\p:Y\to M$ and an embedding $\s:Y\to{\rm Gr}\bigl(T^{\C\!}M\bigr)$ 
such that $(TM,{\rm Id}_{TM},\s,\nabla)$ is a Riemannian almost twistorial structure, where $\nabla$ is the Levi-Civita connection of $M$.\\ 
\indent 
Moreover, assuming integrability, the twistor space can be defined at least by restricting to any convex open subset of $M$.  
\end{rem} 

\begin{rem}   
a) Let $(E,\r,\s,\nabla)$ be a Riemannian almost twistorial structure on $M$. Endow $Y$ with the Riemannian metric 
with respect to which $\H=({\rm ker}\dif\!\p)^{\perp}$, $\p$ is a Riemannian submersion and on its fibres the metrics are the same with the ones 
induced from ${\rm Gr}\bigl(E^{\C\!}\bigr)$\,. Then $\Cal$ is coisotropic.\\ 
\indent   
b) Conversely, let $M$ be a Riemannian manifold and let $Z$ be a complex manifold endowed with a conjugation. 
Let $\p$ and $\phi$ be submersions from a Riemannian manifold $Y$ onto $M$ and $Z$, respectively, such that:\\ 
\indent 
\quad(1) $\p$ is a proper Riemannian submersion with totally geodesic fibres;\\ 
\indent 
\quad(2) $\Cal=(\dif\!\phi)^{-1}\bigl(T^{0,1}Z\bigr)$ is a coisotropic distribution on $Y$;\\ 
\indent 
\quad(3) For any $x\in M$, the map $Y_x\to{\rm Gr}\bigl(T_x^{\C\!}M\bigr)$\,, $y\mapsto\dif\!\p\bigl(\Cal_y\bigr)$\,, $(y\in Y_x)$\,, 
is an Euclidean $Y_{x\,}$-structure on $T_xM$, where $Y_x=\p^{-1}(x)$\,;\\ 
\indent 
\quad(4) For any $x\in M$, the inclusion map $Y_x\to Y$ is an isometric embedding, where $Y_x$ is endowed with the K\"ahler metric induced 
from ${\rm Gr}\bigl(T_x^{\C\!}M\bigr)$\,;\\ 
\indent 
\quad(5) For any $x\in M$, the restriction of $\phi$ to $Y_x$ is a holomorphic embedding intertwining the conjugations.\\ 
\indent 
Suppose that $Y$ is induced through a vector bundles morphism $\r:E\to TM$ from a Riemannian vector bundle $E$ 
endowed with a compatible connection $\nabla$ with totally antisymmetric torsion, and $Y\subseteq{\rm Gr}\bigl(E^{\C\!}\bigr)$ 
and $\H=({\rm ker}\dif\!\p)^{\perp}$ associated to $\nabla$. Then $(E,\r,\s,\nabla)$ is a Riemannian twistorial structure, with twistor space $Z$, 
where $\s:Y\to{\rm Gr}\bigl(E^{\C\!}\bigr)$ is the inclusion. For brevity, we say that \emph{$(E,\r,\s,\nabla)$ is given by $\phi$ and $\p$}\,. 
\end{rem} 

\indent 
Let $M$ be endowed with a Riemannian twistorial structure given by $\phi:Y\to Z$ and $\p:Y\to M$. 
Let $\H=({\rm ker}\dif\!\p)^{\perp}$ and $\mathscr{K}=({\rm ker}\dif\!\phi)^{\perp}$ be the horizontal distributions on $Y$ 
determined by $\p$ and $\phi$\,, respectively. Let $J$ be the orthogonal complex structure on $\mathscr{K}$ with respect 
to which $\dif\!\phi|_{\mathscr{K}}$ is complex linear at each point. Note that, $J$ restricts to give the complex structures of the fibres of $\p$\,.\\ 
\indent 
Because $\Cal$ is coisotropic and $({\rm ker}\dif\!\phi)^{\C\!}=\Cal\cap\,\overline{\Cal}$ 
we have $\mathscr{K}^{\C\!}=\Cal^{\perp}\oplus\,\overline{\Cal^{\perp}}$. Together with $\Cal^{\perp}\supseteq({\rm ker}\dif\!\p)^{0,1}$, 
this implies ${\rm ker}\dif\!\p\subseteq\mathscr{K}$; equivalently, ${\rm ker}\dif\!\phi\subseteq\H$. We shall denote by $P_{\H\cap\mathscr{K}}$ 
the orthogonal projection onto $\H\cap\mathscr{K}$\,. 

\begin{thm} \label{thm:Rts} 
Let $M$ be endowed with a Riemannian twistorial structure given by $\phi:Y\to Z$ and $\p:Y\to M$. 
If the fibres of $\phi$ are minimal then the following assertions are equivalent:\\ 
\indent 
\quad{\rm (i)} $\phi$ pulls back holomorphic functions on $Z$ to harmonic functions on $Y$;\\ 
\indent 
\quad{\rm (ii)} $\trace_{\o}\bigl(I^{\H}+P_{\H\cap\mathscr{K}}T\bigr)=0$\,, where $I^{\H}$ is the integrability tensor of $\H$, $\o$ is the K\"ahler form of 
$\bigl(\H\cap\mathscr{K},J|_{\H\cap\mathscr{K}}\bigr)$\,, and $T$ is the section of $\H\otimes\Lambda^2\H^*$ 
corresponding to the torsion of the induced connection on $M$.\\ 
\indent  
Consequently, if $Z$ is K\"ahler, the fibres of $\phi$ are minimal and {\rm (ii)} holds then $\phi$ is a harmonic map. 
\end{thm}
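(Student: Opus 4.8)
The plan is to test assertion~(i) by computing, for each local holomorphic function $f$ on $Z$, the Laplacian $\Delta(f\circ\phi)=\trace_g\nabla\!\dif(f\circ\phi)$ on $Y$, and to evaluate it by summing the partial traces of the Hessian $\nabla\!\dif(f\circ\phi)$ along the orthogonal splitting $TY=(\H\cap\mathscr{K})\oplus{\rm ker}\dif\!\p\oplus{\rm ker}\dif\!\phi$ (recall ${\rm ker}\dif\!\phi\subseteq\H$ and ${\rm ker}\dif\!\p\subseteq\mathscr{K}$). The key remark is that, as $f$ is holomorphic, $\dif\!f$ is of type $(1,0)$, so that $\dif(f\circ\phi)=\phi^*(\dif\!f)$ annihilates $\Cal=(\dif\!\phi)^{-1}\bigl(T^{0,1}Z\bigr)$; in particular it vanishes on ${\rm ker}\dif\!\phi$ and on $\mathscr{K}^{0,1}=\Cal\cap\mathscr{K}^{\C\!}$, and is nonzero only along $\mathscr{K}^{1,0}$.

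First I would dispose of the two vertical contributions. On ${\rm ker}\dif\!\phi$ the one-form $\dif(f\circ\phi)$ vanishes, so the trace of the Hessian over this summand equals $-\dif(f\circ\phi)$ applied to the mean curvature vector of the fibres of $\phi$; this is exactly where the minimality hypothesis enters, killing the term. On ${\rm ker}\dif\!\p$ the fibres of $\p$ are totally geodesic and carry the K\"ahler metric induced from the Grassmannian, while $\phi$ restricts to a holomorphic embedding on each of them; hence $f\circ\phi$ restricts to a holomorphic, hence harmonic, function on every fibre, and the totally geodesic property identifies the corresponding trace with the intrinsic fibre Laplacian, which vanishes.

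The remaining, and principal, contribution is $\trace_{\H\cap\mathscr{K}}\nabla\!\dif(f\circ\phi)$. Choosing an orthonormal frame $\{e_i,Je_i\}$ of $\H\cap\mathscr{K}$ and setting $W_i=e_i-{\rm i}Je_i\in(\H\cap\mathscr{K})^{1,0}$, the symmetry of the Hessian gives $(\nabla\!\dif(f\circ\phi))(e_i,e_i)+(\nabla\!\dif(f\circ\phi))(Je_i,Je_i)=(\nabla\!\dif(f\circ\phi))(W_i,\overline{W_i})$, and as $\overline{W_i}\in\mathscr{K}^{0,1}\subseteq\Cal$ is annihilated by $\dif(f\circ\phi)$, this equals $-\dif(f\circ\phi)\bigl(P_{\mathscr{K}^{1,0}}\nabla_{W_i}\overline{W_i}\bigr)$. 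The crux is to compute $\sum_iP_{\mathscr{K}^{1,0}}\nabla_{W_i}\overline{W_i}$: its $({\rm ker}\dif\!\p)$-component is governed, via O'Neill's formula for the horizontal distribution $\H$, by $\sum_i[W_i,\overline{W_i}]^{\V}$ and reproduces $\trace_{\o}I^{\H}$, while its $(\H\cap\mathscr{K})$-component measures the failure of the Levi-Civita connection of $Y$ (used in the Hessian) to agree with the connection $\nabla$ defining $\H$ and $J$, a discrepancy given precisely by the totally antisymmetric $\iota$-torsion and equal to $\trace_{\o}\bigl(P_{\H\cap\mathscr{K}}T\bigr)$. Since, as $f$ varies over local holomorphic functions, $\phi^*(\dif\!f)$ realises every $(1,0)$-covector along $\mathscr{K}$, the vanishing of this term for all $f$ is equivalent to $\trace_{\o}\bigl(I^{\H}+P_{\H\cap\mathscr{K}}T\bigr)=0$; together with the vanishing of the two vertical traces this proves the equivalence of~(i) and~(ii).

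For the concluding statement, when $Z$ is K\"ahler I would instead use the composition formula $\Delta(f\circ\phi)=\trace_g\nabla\!\dif\!f\bigl(\dif\!\phi,\dif\!\phi\bigr)+\dif\!f(\tau(\phi))$. The same pairing of $e_i$ with $Je_i$ turns the first term into the trace of the $(1,1)$-part of the Hessian of $f$, which vanishes for a holomorphic function on a K\"ahler manifold, so that $\Delta(f\circ\phi)=\dif\!f(\tau(\phi))$. Assuming~(ii), hence~(i), forces $\dif\!f(\tau(\phi))=0$ for every local holomorphic $f$, whence $\tau(\phi)^{1,0}=0$ and, $\tau(\phi)$ being real, $\tau(\phi)=0$; that is, $\phi$ is harmonic. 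The main obstacle is the computation of the third paragraph: keeping the two connections apart and matching the $\mathscr{K}^{1,0}$-component of $\sum_i\nabla_{W_i}\overline{W_i}$ with the tensors $I^{\H}$ and $P_{\H\cap\mathscr{K}}T$, including the correct contraction $\trace_{\o}$ and numerical normalisation.
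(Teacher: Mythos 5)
Your skeleton is sound and is, in essence, a self-contained unpacking of the paper's own argument: your two fibre computations (minimality of the fibres of $\phi$\,; totally geodesic K\"ahler fibres of $\p$ on which $f\circ\phi$ restricts to a holomorphic function) are what the paper invokes, via \cite[Appendix A]{PanWoo-d}\,, to reduce assertion (i) to the vanishing of $\Div_{\!\mathscr{K}}\!J$; your third paragraph aims at the paper's identity $\Div_{\!\mathscr{K}}\!J=-\tfrac12\trace_{\o}\bigl(I^{\H}+P_{\H\cap\mathscr{K}}T\bigr)$; and your final paragraph is the pseudo-harmonic-morphism argument that the paper cites from \cite{Lou-pseudo_hm}\,.

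However, at the step you yourself single out as the crux there is a genuine gap, not merely a missing normalisation. You ascribe the entire $(\H\cap\mathscr{K})$-component of $\sum_iP_{\mathscr{K}^{1,0}}\nabla_{W_i}\overline{W_i}$ to the discrepancy between the Levi-Civita connection of $Y$ and ``the connection $\nabla$ defining $\H$ and $J$'', that is, to the torsion. This tacitly assumes that the compatible connection preserves the eigenspaces of $J$ along $\p$-horizontal directions: that for $X_1$ a section of $\H$ and $X_2$ a section of ${\rm ker}(J+{\rm i})$\,, the projection onto $\H\cap\mathscr{K}$ of $\p^*(\r\nabla)_{X_1}X_2$ lies again in ${\rm ker}(J+{\rm i})$\,, so that the $\r\nabla$-term contributes nothing of type $(1,0)$\,. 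This is not automatic, because $J$ is not defined by $\nabla$ at all: it is defined by $\phi$ (the complex structure making $\dif\!\phi|_{\mathscr{K}}$ complex linear), whereas compatibility of $\nabla$ only says that $\s(y)\subseteq E^{\C\!}$, hence also $\s(y)^{\perp}$, is $\nabla$-parallel along $\p$-horizontal curves. Bridging the two requires (a) the identification of ${\rm ker}(J+{\rm i})\cap(\H\cap\mathscr{K})^{\C\!}$ with the $\p$-horizontal lift of $\r\bigl(\s(y)\bigr)^{\perp}$, and (b) Lemma \ref{lem:isotropic_part_of_induced}\,: $\r\bigl(\s(y)\bigr)^{\perp}=\s(y)^{\perp}\cap T^{\C\!}M$, and the orthogonal projection of $\r\bigl(\s(y)^{\perp}\bigr)$ onto $\bigl(\r(\s(y))\cap\overline{\r(\s(y))}\,\bigr)^{\perp}=\dif\!\p\bigl((\H\cap\mathscr{K})^{\C\!}\bigr)$ equals $\r\bigl(\s(y)\bigr)^{\perp}$. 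Point (b) is exactly what handles the general case $E\neq TM$ (when $E=TM$ coisotropy alone suffices, which is why Corollary \ref{cor:Rts} is easier). Without (a) and (b) the $\r\nabla$-term can a priori contribute a $(1,0)$-part, and your identification of the horizontal contribution with $\trace_{\o}\bigl(P_{\H\cap\mathscr{K}}T\bigr)$ --- hence the equivalence of (i) and (ii) --- does not close. Adding this parallel-transport argument, which is the first half of the paper's proof, completes your computation.
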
 
\begin{proof} 
Let $(E,\r,\s,\nabla)$ be the given Riemannian twistorial structure. Note that, $\r\nabla$ is related to the Levi-Civita connection $\nabla^M$ of $M$ 
by $\r\nabla=\nabla^M+\tfrac12\,T$, where $T$, also, denotes the torsion of $\r\nabla$.\\ 
\indent  
Let $(y_t)_t$ be a curve on $Y$ horizontal with respect to $\p$\,; that is, $\dot{y}_t\in\H_{y_t}$\,, for any $t$\,. Because $\H$ is induced by $\nabla$,  
for any section $s$ of the pull back by $(\p(y_t))_t$ of $E$ such that $s_t\in\s(y_t)$\,, for any $t$\,, we have 
$\nabla_{\dif\!\p(\dot{y}_t)}s\in\s(y_t)$\,, for any $t$\,. Thus, if $s_t\in\s(y_t)\cap T_{\p(y_t)}^{\C\!}M$, for any $t$\,, then 
$(\r\nabla)_{\dif\!\p(\dot{y}_t)}s\in\r\bigl(\s(y_t)\bigr)$\,, for any $t$\,. Together with Lemma \ref{lem:isotropic_part_of_induced}\,, we deduce that 
if $s_t\in\r\bigl(\s(y_t)\bigr)^{\perp}$, for any $t$\,, then $(\r\nabla)_{\dif\!\p(\dot{y}_t)}s\in\r\bigl(\s(y_t)^{\perp}\bigr)$\,, for any $t$\,.\\   
\indent  
As the fibres of $\phi$ are minimal, with notations similar to \cite[Appendix A]{PanWoo-d}\,, assertion (i) is equivalent to $\Div_{\!\mathscr{K}}\!J=0$\,.\\ 
\indent 
Note that, at each $y\in Y$, the $-{\rm i}$ eigenspace of $J$ restricted to $\H_y\cap\mathscr{K}_y$ is the horizontal lift, with respect to $\p$\,, 
of $\r\bigl(\s(y)\bigr)^{\perp}$. Consequently, for any sections $X_1$ and $X_2$ of $\H$ such that $X_2$ is 
a section of ${\rm ker}(J+{\rm i})$\,, the orthogonal projection onto $\H\cap\mathscr{K}$ of $\p^{*\!}(\r\nabla)_{X_1}X_2$ 
is a section of ${\rm ker}(J+{\rm i})$\,.\\   
\indent 
Together with the fact that $\p$ is a Riemannian submersion and its fibres are K\"ahler manifolds, a straightforward calculation gives 
$\Div_{\!\mathscr{K}}\!J=-\tfrac12\trace_{\o}\bigl(I^{\H}+P_{\H\cap\mathscr{K}}T\bigr)$\,.\\ 
\indent 
The last statement follows from \cite{Lou-pseudo_hm} (or \cite[Appendix A]{PanWoo-d}\,). 
\end{proof} 

\begin{cor} \label{cor:Rts} 
Let $M$ be endowed with a Riemannian twistorial structure given by $\phi:Y\to Z$ and $\p:Y\to M$, and such that $E=TM$. 
Then the following assertions are equivalent:\\ 
\indent 
\quad{\rm (i)} $\phi$ pulls back holomorphic functions on $Z$ to harmonic functions on $Y$;\\ 
\indent 
\quad{\rm (ii)} $\trace_{\o}\bigl(I^{\H}+P_{\H\cap\mathscr{K}}T\bigr)=0$\,, where $I^{\H}$ is the integrability tensor of $\H$, $\o$ is the K\"ahler form of 
$\bigl(\H\cap\mathscr{K},J|_{\H\cap\mathscr{K}}\bigr)$ and $T$ is the section of $\H\otimes\Lambda^2\H^*$ 
corresponding to the torsion of the connection.\\ 
\indent  
Consequently, if $Z$ is K\"ahler and {\rm (ii)} holds then $\phi$ is a harmonic map. 
\end{cor}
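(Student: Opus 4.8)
The plan is to deduce Corollary~\ref{cor:Rts} from Theorem~\ref{thm:Rts} as the particular case $E=TM$ (so that $\r=\mathrm{Id}_{TM}$ and $\r\nabla=\nabla$). The only additional point to settle is that, in this case, the hypothesis of Theorem~\ref{thm:Rts} that the fibres of $\phi$ be minimal is automatically fulfilled; once this is established, the equivalence of (i) and (ii) and the final assertion follow word for word from the theorem.

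First I would determine $\ker\dif\!\phi$. Since $\r=\mathrm{Id}$, the distribution $\mathcal B$ with $\Cal=\mathcal B\oplus(\ker\dif\!\p)^{0,1}$ is the $\nabla$-horizontal lift of $\s(y)\subseteq T_x^{\C\!}M$. As horizontal lifting is a conjugation-commuting $\C$-linear isomorphism onto $\H^{\C\!}$, and the two vertical summands $(\ker\dif\!\p)^{0,1}$, $(\ker\dif\!\p)^{1,0}$ meet only in $0$, the equality $(\ker\dif\!\phi)^{\C\!}=\Cal\cap\overline{\Cal}$ shows that $\ker\dif\!\phi$ is the horizontal lift of the real distribution $W$ determined by $W^{\C\!}=\s(y)\cap\overline{\s(y)}$. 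In particular $\ker\dif\!\phi\subseteq\H$, so each fibre $F=\phi^{-1}(z)$ is horizontal, $\p|_F$ is an immersion, and its image $\overline F\subseteq M$ is a submanifold with $T\overline F=W$.

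Next I would show that $\overline F$ is totally geodesic in $M$. Because $F$ is $\nabla$-horizontal and $\ker\dif\!\phi$ is integrable, moving along $F$ carries the Grassmannian point by $\nabla$-parallel transport; since such transport commutes with conjugation and with intersection, $W$ is $\nabla$-parallel along $\overline F$. Hence $\nabla_XY\in W$ for tangent fields $X,Y$ of $\overline F$, and, using $\nabla=\nabla^M+\tfrac12T$ (the relation $\r\nabla=\nabla^M+\tfrac12T$ from the proof of Theorem~\ref{thm:Rts} specialised to $\r=\mathrm{Id}$), the second fundamental form of $\overline F$ equals $(\nabla^M_XY)^{\perp}=-\tfrac12\bigl(T(X,Y)\bigr)^{\perp}$. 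This expression is antisymmetric in $X,Y$, whereas the second fundamental form is symmetric; therefore it vanishes and $\overline F$ is totally geodesic. Passing back to $Y$ through the O'Neill formulas for the Riemannian submersion $\p$, the mean curvature of $F$ splits into a $\ker\dif\!\p$-part, each of whose terms $A_{e_i}e_i$ vanishes by skew-symmetry of the O'Neill integrability tensor $A$, and an $\H$-part equal to the horizontal lift of the mean curvature of $\overline F$, which is $0$ by the above. Thus the fibres of $\phi$ are minimal, and Theorem~\ref{thm:Rts} yields the asserted equivalence together with the harmonicity of $\phi$ when $Z$ is K\"ahler and (ii) holds.

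The step I expect to be the main obstacle is the interplay of the two connections: $W$ is parallel for $\nabla$, while minimality is a statement about the Levi-Civita connection $\nabla^M$. It is precisely the total antisymmetry of the torsion $T$ — equivalently, that the contorsion $\nabla-\nabla^M=\tfrac12T$ is antisymmetric in its two lower arguments — that makes the \emph{a priori} symmetric second fundamental form of $\overline F$ antisymmetric, hence zero. Keeping the two connections and the two symmetry types straight is the delicate part; everything else is bookkeeping with the O'Neill formulas and with the description of $\ker\dif\!\phi$ obtained above.
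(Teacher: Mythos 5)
Your proposal is correct and takes essentially the same approach as the paper: the paper's proof likewise reduces to Theorem \ref{thm:Rts} by observing that $\p$ maps the fibres of $\phi$ (which are horizontal) to totally geodesic immersions in $M$, so that these fibres are minimal. The only difference is one of detail: the paper asserts the totally geodesic property outright, whereas you prove it --- via the $\nabla$-parallelism of $W$ along the projected fibres and the total antisymmetry of the torsion turning $\nabla$-autoparallelism into total geodesy for the Levi-Civita connection --- and then pass back to the fibres in $Y$ by the O'Neill formulas.
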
 
\begin{proof} 
As $\p$ is a Riemannian submersion mapping the fibres of $\phi$ to totally geodesic immersions to $M$, the fibres of $\phi$ are totally geodesic, 
and the proof follows from Theorem \ref{thm:Rts}\,.  
\end{proof} 

\begin{rem} \label{rem:Rts}
Conditions (ii) of Theorem \ref{thm:Rts} and Corollary \ref{cor:Rts} are automatically satisfied if $I^{\H}(X,\overline{X})=T(X,\overline{X})=0$\,, 
for any $X\in\Cal^{\perp}\cap\H^{\C\!}$. 
\end{rem} 

\begin{prop} \label{prop:extended_E-S} 
Let $M$ be endowed with a Riemannian twistorial structure given by $\phi:Y\to Z$ and $\p:Y\to M$, and such that $E=TM$ and $\nabla$ is the Levi-Civita 
connection of $M$.\\ 
\indent 
Then, for any complex submanifold $N$ of $Z$ whose preimage by $\phi$ is horizontal with respect to $\p$\,, the immersion 
$\p|_{\phi^{-1}(N)}:\phi^{-1}(N)\to M$ is minimal.  
\end{prop}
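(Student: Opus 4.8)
The plan is to show that the tension field of $\p|_{\phi^{-1}(N)}$ vanishes. Write $\widetilde{N}=\phi^{-1}(N)$ and let $\iota:\widetilde{N}\to Y$ be the inclusion, so that $\p|_{\widetilde{N}}=\p\circ\iota$. Since $N$ is a submanifold and $\phi$ a submersion, $\widetilde{N}$ is a submanifold of $Y$ with $\ker\dif\!\phi\subseteq T\widetilde{N}$; the hypothesis that $\widetilde{N}$ is horizontal means $T\widetilde{N}\subseteq\H$, whence $\p|_{\widetilde{N}}$ is an isometric immersion (for the metric induced from $Y$) and, being the restriction of a Riemannian submersion to horizontal directions, satisfies $\nabla\dif\!\p(X,X')=0$ for all $X,X'\in T\widetilde{N}$. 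By the composition formula for tension fields this gives $\tau(\p\circ\iota)=\dif\!\p\bigl(\tau(\iota)\bigr)$, so, as $\p|_{\widetilde{N}}$ is minimal exactly when its tension field vanishes, it suffices to prove that $\tau(\iota)$, the trace of the second fundamental form of $\widetilde{N}$ in $Y$, is vertical with respect to $\p$.

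First I would record the pointwise orthogonal splitting $T\widetilde{N}=\ker\dif\!\phi\oplus\bigl(T\widetilde{N}\cap\mathscr{K}\bigr)$, which holds because $\mathscr{K}=(\ker\dif\!\phi)^{\perp}$. Since $\dif\!\phi|_{\mathscr{K}}$ is complex linear for $J$ and $N$ is a complex submanifold, $\dif\!\phi$ maps $T\widetilde{N}\cap\mathscr{K}$ isomorphically onto $TN$, so this subbundle is $J$-invariant; moreover $T\widetilde{N}\cap\mathscr{K}\subseteq\H\cap\mathscr{K}$ by horizontality. Accordingly $\tau(\iota)$ splits as the trace of the second fundamental form over $\ker\dif\!\phi$ plus its trace over $T\widetilde{N}\cap\mathscr{K}$, and I would treat the two separately.

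For the $\ker\dif\!\phi$-trace: the fibres of $\phi$ are totally geodesic in $Y$ (as in the proof of Corollary \ref{cor:Rts}\,, since $E=TM$ and $\nabla$ is the Levi-Civita connection), so $\sum_{\alpha}\nabla^Y_{f_\alpha}f_\alpha$ is tangent to the fibre, hence to $\widetilde{N}$, for any orthonormal frame $(f_\alpha)$ of $\ker\dif\!\phi$; its $\widetilde{N}$-normal component therefore vanishes. For the $(T\widetilde{N}\cap\mathscr{K})$-trace I would run the Eells--Salamon argument of Theorem \ref{thm:CR_EelSal}\,: for sections $Z,Z'$ of $(T\widetilde{N}\cap\mathscr{K})^{1,0}$ one has $\nabla\dif(\p\circ\iota)(\overline{Z},Z')=\dif\!\p\bigl((\nabla^Y_{\overline{Z}}Z')^{\perp}\bigr)$, so only the normal-$\cap\,\H$ component of $\nabla^Y_{\overline{Z}}Z'$ survives $\dif\!\p$; this component lies in $\H\cap\mathscr{K}$, and, matching horizontal covariant derivatives through O'Neill's formula with $\p^{*}(\nabla^M)$, the key property extracted in the proof of Theorem \ref{thm:Rts} (with $T=0$ here) shows it is of type $(1,0)$, i.e.\ in $\ker(J-{\rm i})$. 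Combining this with the symmetry of the second fundamental form and conjugation, exactly as in the last lines of the proof of Theorem \ref{thm:CR_EelSal}\,, forces it to vanish; hence $\nabla\dif(\p\circ\iota)(\overline{Z},Z')=0$, and tracing shows the $(T\widetilde{N}\cap\mathscr{K})$-trace of the second fundamental form is $\p$-vertical. Putting the two pieces together gives $\dif\!\p(\tau(\iota))=0$, that is $\tau(\p|_{\widetilde{N}})=0$.

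The main obstacle is the complex-directions step: one must carefully transfer the statement from the proof of Theorem \ref{thm:Rts}\,, which is phrased for $\p^{*}(\nabla^M)$ and the orthogonal projection onto $\H\cap\mathscr{K}$, to the genuine second fundamental form of $\widetilde{N}$ in $Y$ computed with the Levi-Civita connection of $Y$, and then upgrade the resulting ``$J$-type'' statement to an actual vanishing via the symmetry-plus-conjugation trick. By contrast, the verticality of the $\ker\dif\!\phi$-trace is routine once the fibres of $\phi$ are known to be totally geodesic.
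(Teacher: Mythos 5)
Your proposal is correct and is essentially the proof the paper intends: the paper omits it as ``similar to the proof of Theorem \ref{thm:CR_EelSal}'', and your treatment of the $T\widetilde{N}\cap\mathscr{K}$-directions is exactly that argument (mixed second fundamental form of type $(1,0)$, via the parallelism of the coisotropic family along $\p$-horizontal curves extracted in the proof of Theorem \ref{thm:Rts}, then vanishing by symmetry plus conjugation), combined with the totally geodesic $\phi$-fibres from Corollary \ref{cor:Rts} for the $\ker\dif\!\phi$-directions. The only difference is presentational: you run the argument upstairs on $Y$ through the composition law and the O'Neill identities, whereas the model proof works downstairs on $M$ with a (here parallel) field of $f$-structures along $\p\bigl(\phi^{-1}(N)\bigr)$.
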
 
\begin{proof} 
This similar to the proof of Theorem \ref{thm:CR_EelSal} and will be omitted. 
\end{proof} 

\begin{rem} 
With the same notations as in Proposition \ref{prop:extended_E-S}\,, similarly to Corollary \ref{cor:CR_EelSal_1}\,, 
we obtain that $\phi|_{\phi^{-1}(N)}$ (has geodesic fibres and) is PHH, in the sense of \cite{ApApBr} (see \cite{BaiWoo2}\,).  
\end{rem} 

\begin{exm} \label{exm:flat_hts} 
Let $U$ be endowed with an Euclidean $Y$-structure $\s$\,. Then on endowing $U$ with the trivial flat connection we obtain 
a Riemannian twistorial structure whose twistor space is the holomorphic vector bundle $\mathcal{U}$ of the tautological exact sequence \eqref{e:Ets}\,. 
Note that, $\p$ is the canonical projection from $Y\times U$ to $U$ and $\phi$ is the restriction to $Y\times U$ of the canonical projection 
from $Y\times U^{\C\!}$ to $\mathcal{U}$\,.\\ 
\indent 
Furthermore, if $\s$ is a Hermitian $Y$-structure then the radial projection from $U\setminus\{0\}$ onto the unit sphere $S\subseteq U$ induces a 
Riemannian twistorial structure on $S$, with $\nabla$ the restriction to $S$ of the trivial connection on $U$. The corresponding twistor space 
is the complex projectivisation $P\mathcal{U}$ and, note that, the radial projection $U\setminus\{0\}\to S$ is the twistorial map 
(\,\cite{PanWoo-sd}\,, \cite{LouPan-II}\,) corresponding to the holomorphic submersion $\mathcal{U}\setminus\{0\}\to P\mathcal{U}$\,. 
\end{exm} 

\begin{exm} \label{exm:homog_hts} 
Let $G$ be a semisimple complex Lie group and let $P\subseteq G$ be a parabolic subgroup. Let $H\subseteq G$ be a 
closed complex Lie subgroup which is semisimple, contains no normal subgroup of $G$, and $G/H$ is reductive with respect to 
$\mathfrak{m}\subseteq\gf$\,, where $\gf$ is the Lie algebra of $G$.\\ 
\indent 
Suppose that $P$ and $(H,\mathfrak{m})$ satisfy the following compatibility conditions:\\ 
\indent 
\quad(i) $H\cap P$ is a parabolic subgroup of $H$;\\ 
\indent 
\quad(ii) $\mathfrak{m}$ is nondegenerate with respect to the Killing form of $\gf$\,;\\ 
\indent 
\quad(iii) $\mathfrak{p}=\mathfrak{h}\cap\mathfrak{p}+\mathfrak{m}\cap\mathfrak{p}$\,, where $\mathfrak{p}$ and $\mathfrak{h}$ 
are the Lie algebras of $P$ and $H$, respectively;\\  
\indent 
\quad(iv) $\mathfrak{p}^{\perp}=\mathfrak{h}\cap\mathfrak{p}^{\perp}+\mathfrak{m}\cap\mathfrak{p}^{\perp}$ and $\mathfrak{m}\cap\mathfrak{p}^{\perp}$ 
is the orthogonal complement of $\mathfrak{m}\cap\mathfrak{p}$ in $\mathfrak{m}$ (in particular, $\mathfrak{m}\cap\mathfrak{p}$ 
is coisotropic in $\mathfrak{m}$\,);\\ 
\indent 
\quad(v) $H\cap P=\{a\in H\,|\,({\rm Ad}\,a)(\mathfrak{m}\cap\mathfrak{p})=\mathfrak{m}\cap\mathfrak{p}\}$ (note that, `$\subseteq$' 
is automatically satisfied).\\ 
\indent 
We may choose a compact real form $G^{\R}$ of $G$ such that $H^{\R}=G^{\R}\cap H$ is a compact real form of $H$, and 
we assume that $\mathfrak{m}^{\R}=\mathfrak{m}\cap\gf^{\R}$ is complementary to $\mathfrak{h}^{\R}$ in $\gf^{\R}$, where 
$\gf^{\R}$ and $\mathfrak{h}^{\R}$ are the Lie algebras of $G^{\R}$ and $H^{\R}$, respectively.\\ 
\indent 
Endow $M=G^{\R}/H^{\R}$ with the canonical connection $\nabla$ determined by $\mathfrak{m}^{\R}$ 
and with the $G^{\R}$-invariant Riemannian metric given by the restriction to $\mathfrak{m}^{\R}$ of the opposite of the Killing form 
of $\gf^{\R}$. Then the torsion of $\nabla$ is totally antisymmetric. Moreover, $(M,\nabla)$ satisfy (ii) of Corollary \ref{cor:Rts} (this follows from 
the fact that $\mathfrak{p}^{\perp}$ is nilpotent), with $Y=G^{\R}/(H^{\R}\cap P)$, $Z=G/P$, and $\p:Y\to M$ and $\phi:Y\to Z$ 
the canonical projections (but, note that, the Riemannian metric on $Y$ is not necessarily the quotient metric). Furthermore, 
the Riemannian twistorial structure on $M$ is of Hermitian type if and only if $\mathfrak{h}$ contains the reductive part of $\mathfrak{p}$\,; 
that is, $\mathfrak{h}\supseteq\mathfrak{p}\cap\overline{\mathfrak{p}}$ (equivalently, $\mathfrak{m}\cap\mathfrak{p}=\mathfrak{m}\cap\mathfrak{p}^{\perp}$).\\ 
\indent 
A significant particular case is when $\mathfrak{m}=\mathfrak{h}^{\perp}$ in which case (ii)\,, (iii) and (iv)\,, above, are automatically satisfied. 
This contains the case when $\nabla$ is torsion free; equivalently, up to Riemannian covering spaces, $M$ is a Riemannian symmetric space.\\ 
\indent 
Thus, the examples of twistor spaces of \cite{Bry-har_twistor} and \cite{BurRaw} fit into our setting. Also, all of the examples of \cite{fq_2} 
(see in \cite{Pan-hqo} the cases with splitting normal bundle exact sequence for the twistor spheres) whose twistor spaces are generalized flag manifolds, 
also, provide concrete examples of Riemannian twistorial structures.\\ 
\indent 
Moreover, similarly to \cite{Bry-har_twistor} and \cite{BurRaw}\,, if $\mathfrak{m}=\mathfrak{h}^{\perp}$ and $\nabla$ is torsion free, 
we may construct a $G$-invariant holomorphic distribution $\Fa$ on $Z$ whose preimage by $\dif\!\phi$ is horizontal with respect to $\p$ 
and, thus, by Corollary \ref{cor:CR_EelSal_1} and Proposition \ref{prop:extended_E-S}\,, provides a construction of minimal submanifolds of $M$. 
Note that, geometrically, $\Fa$ can be constructed as follows. Firstly, embed $Z$ into a complex projective space through the 
`generalized Pl\"ucker embedding'. Then $\Fa$ is generated by the tangent spaces to the complex projective lines contained in $Z$ 
whose preimages by $\phi$ are horizontal with respect to $\p$ (note that, unless $G=H$, the distribution $\Fa$ is nonzero).\\ 
\indent 
Among the concrete examples (satisfying (ii) of Corollary \ref{cor:Rts}\,) not previously considered are the following:\\ 
\indent 
1a) Take $M=G^{\R}$ with its Riemannian symmetric space structure, $Z=G/P\times G/P$ and $Y=G^{\R}\times G/P$.\\ 
\indent 
1b) Also, with $\nabla$ the $(-)$-connection we obtain another example of Riemannian twistorial structure on $G^{\R}$ with the same twistor space 
but with a different Riemannian structure on $Y$. \\  
\indent 
Note that, if $G^{\R}/H^{\R}$ is a symmetric space with twistor space $G/P$, as above, then the usual totally geodesic 
embedding of $G^{\R}/H^{\R}$ into $G^{\R}$, endowed with the twistorial structure given by (1a)\,, above, is twistorial. This twistorial map 
corresponds to the diagonal (holomorphic) embedding of $G/P$ into $G/P\times G/P$. This way, we have the following:\\ 
\indent 
2) Let $k,p,q\in\mathbb{N}\setminus\{0\}$\,, $k\leq p/2$\,, $M={\rm Gr}^{+}_p(p+q,\R)$ with its Riemannian symmetric space structure, 
$Z$ the Grassmannian of isotropic $k$-dimensional subspaces of $\C^{\!p+q}$ and $Y=\bigl\{(u,v)\in Z\times M\,|\,u\subseteq v^{\C}\bigr\}$. 
Note that, this Riemannian twistorial structure is of Hermitian type if and only if $p=2k$\,, case considered in \cite{Bry-har_twistor} and \cite{BurRaw}\,; 
also, the case $k=1$\,, $p=3$ was considered in \cite{fq_2} (see \cite{Pan-hqo}\,).\\ 
\indent 
3) Let $k\in\mathbb{N}\setminus\{0\}$ and $p_1,\ldots,p_k,n\in\mathbb{N}$ such that $0<p_1<\cdots<p_k<n$\,. 
The flag manifold $Z=F_{p_1,\ldots,p_k}(n,\C\!)$ is a twistor space for 
$M={\rm Gr}_{p_1-p_2+p_3-\cdots}(n,\C)$\,, with $Y$ formed of those pairs $\bigl((\a_1,\ldots,\a_k),\b\bigr)\in Z\times M$ such that 
$(\a_1\cap\b,\a_3\cap\b,\ldots)\in F_{p_1,p_3-p_2,\ldots}(n,\C\!)$ and $(\a_2\cap\b^{\perp},\a_4\cap\b^{\perp},\ldots)\in F_{p_2-p_1,p_4-p_3,\ldots}(n,\C\!)$\,.  
The case $k\leq2$ is covered by \cite{Bry-har_twistor} and \cite{BurRaw}\,, up to the twistoriality of the embeddings of $M$ into ${\rm SU}(n)$\,.\\ 
\indent 
Note that, also, \cite[Example 4.7]{fq_2} (see \cite[Example 4.6]{Pan-hqo}\,) provides an example of a Riemannian reductive homogoneous space 
which is not a symmetric space and which is endowed with a Riemannian twistorial structure, as above.      
\end{exm} 

\begin{thm} \label{thm:sym_hts} 
Any Riemannian symmetric space admits a nontrivial Riemannian twistorial structure invariant under the isometry group. 
\end{thm}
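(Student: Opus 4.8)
The plan is to realize an arbitrary Riemannian symmetric space inside the framework of Example \ref{exm:homog_hts}, in the torsion-free case $\mathfrak{m}=\mathfrak{h}^{\perp}$ (so that $\nabla$ is the Levi-Civita connection). By the de Rham decomposition I would pass to the universal cover and reduce to a product of a flat factor and irreducible factors of compact and of noncompact type; since an invariant structure on a product may be assembled from invariant structures on the factors (cf.\ Example \ref{exm:Ets}(2)), and the flat factor can be given the trivial Euclidean structure, it suffices to treat an irreducible factor, and by the usual duality between compact and noncompact type it is enough to consider the compact case $M=G^{\R}/H^{\R}$ with $G^{\R}$ compact and simple. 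Writing $G$ for the complexification and $H=(H^{\R})^{\C}$, the symmetric decomposition gives $\gf=\mathfrak{h}\oplus\mathfrak{m}$ with $\mathfrak{m}=\mathfrak{h}^{\perp}$ and $[\mathfrak{m},\mathfrak{m}]\subseteq\mathfrak{h}$, so that conditions (ii), (iii) and (iv) of Example \ref{exm:homog_hts} hold automatically; everything then reduces to exhibiting a parabolic $P\subseteq G$ for which (i) and (v) hold and the resulting $Y$-structure is nontrivial.

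First I would construct $P$ from a canonical element, following \cite{BurRaw}. Let $\theta$ be the involution with $\mathfrak{h}=\ker(\theta-1)$, $\mathfrak{m}=\ker(\theta+1)$, extended $\C$-linearly to $\gf$. Choose $\xi\in\mathfrak{h}$ a grading element, i.e.\ an element for which $\operatorname{ad}\xi$ is semisimple with integer eigenvalues; since $\xi\in\mathfrak{h}$, $\operatorname{ad}\xi$ commutes with $\theta$ and preserves both $\mathfrak{h}$ and $\mathfrak{m}$, so the resulting grading $\gf=\bigoplus_j\gf_j$ is $\theta$-compatible and $\mathfrak{p}=\bigoplus_{j\ge0}\gf_j$ is a $\theta$-stable parabolic. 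Then $\mathfrak{h}\cap\mathfrak{p}=\bigoplus_{j\ge0}\mathfrak{h}_j$ is the parabolic subalgebra of $\mathfrak{h}$ determined by $\xi$, which gives (i), while $\mathfrak{m}\cap\mathfrak{p}=\bigoplus_{j\ge0}\mathfrak{m}_j$ satisfies $(\mathfrak{m}\cap\mathfrak{p})^{\perp}=\bigoplus_{j>0}\mathfrak{m}_j\subseteq\mathfrak{m}\cap\mathfrak{p}$, because the Killing form pairs $\mathfrak{m}_j$ with $\mathfrak{m}_{-j}$; hence $\mathfrak{m}\cap\mathfrak{p}$ is coisotropic. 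Choosing $\xi$ so that $\operatorname{ad}\xi$ acts nontrivially on $\mathfrak{m}$ --- possible precisely because the isotropy representation of a nonflat irreducible symmetric space is nontrivial --- makes $\bigoplus_{j>0}\mathfrak{m}_j\neq0$, so $\mathfrak{m}\cap\mathfrak{p}\subsetneq\mathfrak{m}^{\C}$ and the structure is nontrivial.

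The main obstacle is condition (v), namely that the $H$-stabilizer of the coisotropic subspace $\mathfrak{m}\cap\mathfrak{p}$ is no larger than $H\cap P=N_H(\mathfrak{p})$ (the inclusion $\subseteq$ being automatic). Here I would argue that any $a\in H$ with $(\operatorname{Ad}a)(\mathfrak{m}\cap\mathfrak{p})=\mathfrak{m}\cap\mathfrak{p}$ also preserves its orthogonal complement $\bigoplus_{j>0}\mathfrak{m}_j$ (since $a$ preserves the Killing form and $\mathfrak{m}$), hence preserves the brackets $[\bigoplus_{j>0}\mathfrak{m}_j,\mathfrak{m}\cap\mathfrak{p}]$ and, using $\mathfrak{h}=[\mathfrak{m},\mathfrak{m}]$ together with Killing duality, recovers the whole of $\mathfrak{h}\cap\mathfrak{p}$ and thus of $\mathfrak{p}$. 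Making this reconstruction work requires the filtration of $\mathfrak{m}$ induced by $\xi$ to be fine enough to determine $\mathfrak{p}$, which is the delicate point and is arranged by taking $\xi$ sufficiently generic in a Cartan subalgebra of $\mathfrak{h}$ (regular when $\rank\mathfrak{h}=\rank\gf$, and lying on an appropriate face of a Weyl chamber otherwise). Once (i) and (v) are established, Example \ref{exm:homog_hts} in the torsion-free case yields a Riemannian twistorial structure with $Y=G^{\R}/(H^{\R}\cap P)$ and $Z=G/P$, manifestly invariant under $G^{\R}$, i.e.\ under the isometry group, and nontrivial by construction; assembling these over the irreducible factors (together with the trivial structure on the flat factor) and descending from the universal cover --- which is legitimate because the structure is isometry invariant --- completes the proof.
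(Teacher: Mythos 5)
Your handling of the non-flat factors is essentially the paper's own argument: the paper's proof consists of nothing more than the citation of Examples \ref{exm:flat_hts} and \ref{exm:homog_hts}, and your grading-element construction of a $\theta$-stable parabolic, with $\mathfrak{m}=\mathfrak{h}^{\perp}$ making conditions (ii)--(iv) automatic, fills in details that Example \ref{exm:homog_hts} leaves implicit. The genuine gap is the flat case. You give the flat factor the \emph{trivial} Euclidean structure, so when $M$ is itself flat --- and $\R^n$ and flat tori are Riemannian symmetric spaces --- your construction yields only the trivial Riemannian twistorial structure, and the nontriviality asserted by Theorem \ref{thm:sym_hts} fails. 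This is precisely what the first of the paper's two cited ingredients, Example \ref{exm:flat_hts}, is for: any Euclidean $Y$-structure $\s$ on $\R^n$, endowed with the trivial flat connection, gives a Riemannian twistorial structure with twistor space the bundle $\mathcal{U}$ of \eqref{e:Ets}; choosing $\s$ nontrivial and ${\rm O}(n)$-invariant --- for $n\geq2$, e.g., the hyperquadric structure $\s(y)=y^{\perp}$ of the Remark following Lemma \ref{lem:isotropic_part_of_induced} --- produces a nontrivial structure invariant under the full isometry group (translations preserve it by construction, rotations and reflections because they preserve the hyperquadric). Your proposal omits this ingredient entirely.

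Two further points concern rigor rather than the choice of route. Your verification of condition (v) of Example \ref{exm:homog_hts} is, as you yourself flag, incomplete, and the sketched reconstruction of $\mathfrak{p}$ has a real hole: for $k>0$ the piece $\mathfrak{h}_k$ of $\mathfrak{h}=[\mathfrak{m},\mathfrak{m}]$ may only be reached by brackets $[\mathfrak{m}_i,\mathfrak{m}_j]$ with $j<0$, which are not controlled by $\bigl[\mathfrak{m}\cap\mathfrak{p}^{\perp},\mathfrak{m}\cap\mathfrak{p}\bigr]$, so preserving the latter does not obviously recover $\mathfrak{p}$; ``take $\xi$ sufficiently generic'' is an assertion, not an argument. (To be fair, the paper itself only asserts, inside Example \ref{exm:homog_hts}, that symmetric spaces satisfy its compatibility conditions, so on this point you are no less complete than the paper.) Likewise, the reduction of the noncompact case to the compact one ``by duality'' needs justification: Example \ref{exm:homog_hts} is stated for compact real forms, and one must check that the integrability of the almost co-CR structure, which depends on the curvature, survives the sign change under duality; it does, because the relevant conditions involve only the complexified data $(\gf,\mathfrak{h},\mathfrak{m},\mathfrak{p})$ common to the two duals, but as written this step is also left open.
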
 
\begin{proof} 
This follows from Examples \ref{exm:flat_hts} and \ref{exm:homog_hts}\,. 
\end{proof} 

\begin{rem} 
Let $G$ be a semisimple complex Lie group, $P\subseteq G$ a parabolic subgroup and $G^{\R}$ a compact real form of $G$. 
Let $\phi:Y\to Z$ and $\p:Y\to G^{\R}$ be the submersions giving the corresponding 
Riemannian twistorial on $G^{\R}$ as a symmetric space, where $Y=G^{\R}\times G/P$ and $Z=G/P\times G/P$.\\ 
\indent 
Let $a:N\to G^{\R}$ be an immersion. In view of Corollary \ref{cor:CR_EelSal_1} and 
Proposition \ref{prop:extended_E-S} it is useful to understand when $Y$ admits a section over $a$ which is horizontal with respect to the 
connection induced on $Y$ by the the Levi-Civita connnection of $G^{\R}$. A straightforward argument shows that, locally, this condition holds 
if and only if there exists a map $b:N\to G^{\R}$ such that $b^*\theta+{\rm Ad}(b^{-1})A$ takes values in $\mathfrak{p}$\,, 
where $A=\tfrac12\,a^*\theta$\,, with $\theta$ the canonical (left invariant) one-form on $G^{\R}$ and $\mathfrak{p}$ the Lie algebra of $P$.\\ 
\indent 
Conversely, let $A$ be a $\gf^{\R}$-valued one-form on $N$, where $\gf^{\R}$ is the Lie algebra of $G^{\R}$. Suppose that the following conditions 
are satisfied:\\ 
\indent 
\quad(i) $A_x:T_xN\to\gf^{\R}$ is injective, for any $x\in N$\,;\\ 
\indent 
\quad(ii) $\dif\!A+2[A,A]=0$\,;\\ 
\indent 
\quad(iii) Locally, there exists $b:N\to G^{\R}$ such that $b^*\theta+{\rm Ad}(b^{-1})A$ takes values in $\mathfrak{p}$\,.\\ 
\indent 
Then, locally, we can integrate $a^*\theta=2A$ to obtain immersions $a:N\to G^{\R}$ admitting horizontal lifts to $Y$. Finally, 
if the CR structure on $Y$ induces a complex structure on the image of such a lift, from Corollary \ref{cor:CR_EelSal_1}\,, we deduce 
that $a$ is pluriharmonic and the corresponding statement for Proposition \ref{prop:extended_E-S}\,, also, holds. In fact, locally, $\p|_{\phi^{-1}(N)}$ 
is minimal as in Proposition \ref{prop:extended_E-S} (with `$\phi(N)$' replacing `$N$') which shows that the latter approach is more general 
if the CR twistor space is associated to a Riemannian twistorial structure.\\ 
\indent 
The simplest case is when $P/G$ is a Hermitian symmetric space. Then the totally geodesic embeddings of $P/G$ into $G^{\R}$ 
are twistorial pluriharmonic and their restrictions to complex submanifolds of $P/G$ give, as is well known,   
pluriharmonic immersions of uniton number one, in the sense of \cite{OhnVal-plurihar} (cf.\ \cite{Uhl-89}\,), with respect to any 
unitary representation of $G^{\R}$.  
\end{rem} 

\begin{exm} \label{exm:hts_sym_(3)} 
Let $k\in\mathbb{N}\setminus\{0\}$\,, $p_1,\ldots,p_k,n\in\mathbb{N}$ such that $0<p_1<\cdots<p_k<n$\,.  
Let $Z=F_{p_1,\ldots,p_k}(n,\C\!)$ be the twistor space of Example \ref{exm:homog_hts}(3) and $\psi=(\psi_1,\ldots,\psi_k)$ 
a holomorphic immersion to $Z$ such that if $k\geq2$ then 
$\dif\bigl(\G(\psi_j)\bigr)\subseteq\G(\psi_{j+1})$\,, for $j=1,\ldots,{k-1}$\,, 
where $\psi$ also denotes the corresponding flag of holomorphic vector bundles, $\G$ is the functor giving the sheaves of sections,  
and $\dif$ is the covariant derivation of the trivial flat connection on $Z\times\C^{\!n}$.\\ 
\indent 
Denote by $\Pi_{\a}$ the orthogonal projection onto the subspace $\a$ of $\C^{\!n}$ and 
let $\phi_j=\Pi_{\psi_j}-\Pi_{\psi_j^{\perp}}$, 
$j=1,\ldots,k$\,. Then $\phi=\phi_1\cdots\phi_k$ is a twistorial pluriharmonic immersion as in Corollary \ref{cor:CR_EelSal_1}   
(the case $k\leq2$ is well known; compare \cite{Woo-fest} and the references therein), and $\psi$ provides an example 
for Proposition \ref{prop:extended_E-S}\,. Moreover, to deduce that $\phi$ is pluriharmonic it is sufficient to assume $\psi$ holomorphic. 
\end{exm}


\begin{thebibliography}{10} 

\bibitem{ApApBr} 
M.~A.~Aprodu, M.~Aprodu, V.~Br\^inz\u anescu, A class of harmonic submersions and minimal submanifolds, 
\textit{Internat. J. Math.}, {\bf 11} (2000) 1177--1191. 

\bibitem{ArPiSo} 
C.~Arezzo, G.~P.~Pirola, M.~Solci, The Weierstrass representation for pluriminimal submanifolds, 
\textit{Hokkaido Math. J.}, {\bf 33} (2004) 357--367. 

\bibitem{At-57} 
M.~F.~Atiyah, Complex analytic connections in fibre bundles, 
\textit{Trans. Amer. Math. Soc.}, {\bf 85} (1957) 181--207. 

\bibitem{BaiWoo2}
P.~Baird, J.~C.~Wood, \textit{Harmonic morphisms between Riemannian manifolds},
London Math. Soc. Monogr. (N.S.), no. 29, Oxford Univ. Press, Oxford, 2003.  

\bibitem{Bry-har_twistor} 
R.~L.~Bryant, Lie groups and twistor spaces, 
\textit{Duke Math. J.}, {\bf 52} (1985) 223--261. 

\bibitem{BurRaw}
F.~E.~Burstall, J.~H.~Rawnsley, \textit{Twistor theory for Riemannian symmetric spaces.
With applications to harmonic maps of Riemann surfaces},
Lecture Notes in Mathematics, 1424, Springer-Verlag, Berlin, 1990. 

\bibitem{EelSal}
J.~Eells, S.~Salamon, Twistorial construction of harmonic maps of surfaces into four-manifolds,
\textit{Ann. Scuola Norm. Sup. Pisa Cl. Sci. (4)}, {\bf 12} (1985) 589--640.

\bibitem{LeB-CR} 
C.~R.~LeBrun, Twistor CR manifolds and three-dimensional conformal geometry, 
\textit{Trans. Amer. Math. Soc.}, {\bf 284} (1984) 601--616. 

\bibitem{Lou-pseudo_hm} 
E.~Loubeau, Pseudo-harmonic morphisms, 
\textit{Internat. J. Math.}, {\bf 8} (1997) 943--957.

\bibitem{LouPan} 
E.~Loubeau, R.~Pantilie, Harmonic morphisms between Weyl spaces and twistorial maps, 
\textit{Comm. Anal. Geom.}, {\bf 14} (2006) 847--881.

\bibitem{LouPan-II} 
E.~Loubeau, R.~Pantilie, Harmonic morphisms between Weyl spaces and twistorial maps II, 
\textit{Ann. Inst. Fourier (Grenoble)}, {\bf 60} (2010) 433--453. 

\bibitem{fq}
S.~Marchiafava, L.~Ornea, R.~Pantilie, Twistor Theory for CR quaternionic manifolds and related structures,
\textit{Monatsh. Math.}, {\bf 167} (2012) 531--545.  

\bibitem{fq_2}
S.~Marchiafava, R.~Pantilie, Twistor Theory for co-CR quaternionic manifolds and related structures,
\textit{Israel J. Math.}, {\bf 195} (2013) 347--371. 

\bibitem{Mer} 
S.~Merkulov, Existence and geometry of Legendre moduli spaces, 
\textit{Math. Z.}, {\bf 226} (1997) 211--265.

\bibitem{MerSch} 
S.~Merkulov, L.~Schwachh\"ofer, Classification of irreducible holonomies of torsion-free affine connections, 
\textit{Ann. of Math. (2)}, {\bf 150} (1999) 77--149. 

\bibitem{OhnVal-plurihar} 
Y.~Ohnita, G.~Valli, Pluriharmonic maps into compact Lie groups and factorization into unitons, 
\textit{Proc. London Math. Soc. (3)}, {\bf 61} (1990) 546--570.

\bibitem{Pan-tm} 
R.~Pantilie, On a class of twistorial maps, 
\textit{Differential Geom. Appl.}, {\bf 26} (2008) 366--376. 

\bibitem{Pan-integrab_co-cr_q} 
R.~Pantilie, On the integrability of co-CR quaternionic structures, 
\textit{New York J. Math.}, {\bf 22} (2016) 1--20. 

\bibitem{Pan-hqo} 
R.~Pantilie, Quaternionic-like manifolds and homogeneous twistor spaces, 
\textit{Proc. A}, {\bf 472} (2016) 20160598, 11 pp. 

\bibitem{Pan-rtwist}
R.~Pantilie, Twistorial structures revisited, Preprint IMAR, Bucharest, 2016, 
(available from \href{http://arxiv.org/abs/1612.07488}{\tt http://arxiv.org/abs/1612.07488}).  

\bibitem{PanWoo-d} 
R.~Pantilie, J.~C.~Wood, Harmonic morphisms with one-dimensional fibres on Einstein manifolds, 
\textit{Trans. Amer. Math. Soc.}, {\bf 354} (2002) 4229--4243.

\bibitem{PanWoo-sd}
R.~Pantilie, J.~C.~Wood, Twistorial harmonic morphisms with one-dimensional fibres on self-dual four-manifolds,
\textit{Q. J. Math}, {\bf 57} (2006) 105--132. 

\bibitem{Raw-f} 
J.~H.~Rawnsley, $f$-structures, $f$-twistor spaces and harmonic maps, 
\textit{Geometry seminar ``Luigi Bianchi'' II -- 1984}, Lecture Notes in Math., 1164, Springer, Berlin, 1985, 85--159. 

\bibitem{Ro-LeB_nonrealiz}
H.~Rossi, LeBrun's nonrealizability theorem in higher dimensions,
\textit{Duke Math. J.}, {\bf 52} (1985) 457--474. 

\bibitem{Sal-har_holo_maps} 
S.~Salamon, Harmonic and holomorphic maps, 
\textit{Geometry seminar ``Luigi Bianchi'' II -- 1984}, Lecture Notes in Math., 1164, Springer, Berlin, 1985, 161--224.

\bibitem{Som-73} 
A.~J.~Sommese, Borel's fixed point theorem for K\"ahler manifolds and an application, 
\textit{Proc. Amer. Math. Soc.}, {\bf 41} (1973) 51--54. 

\bibitem{Uhl-89} 
K.~Uhlenbeck, Harmonic maps into Lie groups: classical solutions of the chiral model, 
\textit{J. Differential Geom.}, {\bf 30} (1989) 1--50. 

\bibitem{Woo-fest} 
J.~C.~Wood, Explicit constructions of harmonic maps, \textit{Harmonic maps and differential geometry},  
Contemp. Math., 542, Amer. Math. Soc., Providence, RI, 2011, 41--73.


\end{thebibliography}
\end{document}